\renewenvironment{proof}[1][\proofname] {\par\pushQED{\qed}\normalfont\topsep6\p@\@plus6\p@\relax\trivlist\item[\hskip\labelsep\bfseries#1\@addpunct{.}]\ignorespaces}{\popQED\endtrivlist\@endpefalse}
\newcommand{\PP}{\mathbb{P}}
\newcommand{\UU}{U^{\textnormal{all}}}
\newcommand{\rank}{\operatorname{rank}}
\newtheorem{proposition}{Proposition}[section]
\newtheorem{conjecture}[proposition]{Conjecture}
\newtheorem{lemma}[proposition]{Lemma}
\newtheorem{theorem}[proposition]{Theorem}
\newtheorem{question}[proposition]{Question}
\theoremstyle{definition}
\newtheorem*{remark*}{Remark}
\newtheorem*{theorem*}{Theorem}
\DeclareMathSymbol{\lsb@l}{\mathalpha}{letters}{`l}
\title{Partial shuffles by lazy swaps}
\author{Barnabás Janzer\thanks{Department of Pure Mathematics and Mathematical Statistics, University of Cambridge, Wilberforce Road, Cambridge CB3 0WB, United Kingdom. Email: bkj21@cam.ac.uk. Supported by EPSRC DTG.} \and J. Robert Johnson\thanks{School of Mathematical Sciences, Queen Mary, University of London, London E1 4NS, United Kingdom. E-mail: r.johnson@qmul.ac.uk.} \and Imre Leader\thanks{Department of Pure Mathematics and Mathematical Statistics, University of Cambridge, Wilberforce Road, Cambridge CB3 0WB, United Kingdom. Email: i.leader@dpmms.cam.ac.uk.}
}
\date{\vspace{-21pt}}
\begin{document}
	\maketitle
\begin{abstract}
What is the smallest number of random transpositions (meaning that we swap given pairs
of elements with given probabilities) that we can make on an $n$-point set to ensure that each element is uniformly distributed -- in the sense that the probability that $i$ is mapped to $j$ is $1/n$ for all $i$ and $j$? And what if we insist that each {\it pair} is uniformly distributed?

In this paper we show that the minimum for the first problem is about
$\frac{1}{2} n \log_2 n$, with this being exact when $n$ is a power of 2. For the second problem, we show that, rather surprisingly, the answer is not quadratic: $O(n \log^2 n)$ random transpositions suffice. We also show that if we ask only that the pair $1,2$ is uniformly distributed then the answer is $2n-3$. This proves a conjecture of Groenland, Johnston, Radcliffe and Scott.
\end{abstract}
\section{Introduction}

Let $S_n$ denote the symmetric group on $n$ elements, i.e., the set of bijections $[n]\to[n]$. A \emph{lazy transposition} with parameters $(a,b,p)$ is a random permutation $T$ such that $T=(a,b)$ with probability $p$ and $T$ is the identity with probability $1-p$, where $(a,b)$ denotes the transposition swapping $a$ and $b$. We say that the independent lazy transpositions $T_1,\dots, T_l$ form a \emph{transposition shuffle} (of order $n$ and length $l$) if their product $T_1\dots T_l$, which is a random permutation of $S_n$, is uniformly distributed among all the $n!$ elements of $S_n$. What is the shortest possible length $U(n)$ of a transposition shuffle of order $n$? This problem was first raised by Fitzsimons~\cite{fitzsimons}, and also independently studied by Angel and Holroyd~\cite{angel2018perfect}.

It is not difficult to show that $U(n)\leq \binom{n}{2}$ -- there are many constructions achieving this. One example is obtained as follows. We may inductively take $\binom{n-1}{2}$ lazy transpositions $T_1,\dots,T_{\binom{n-1}{2}}$, only permuting $\{1,\dots,n-1\}$, which form a transposition shuffle of order $n-1$. We may also construct lazy transpositions $T_1', \dots, T_{n-1}'$ such that $T_1'\dots T_{n-1}'$ maps the element $n$ to each of $1,2,\dots,n$ with probability $1/n$. Then the random permutation $T_1'\dots T_{n-1}'T_1\dots T_{\binom{n-1}{2}}$ is easily seen to be uniform. Fitzsimons~\cite{fitzsimons} and Angel and Holroyd~\cite{angel2018perfect} asked whether the upper bound $\binom{n}{2}$ is tight. Very recently, Groenland, Johnston, Radcliffe and Scott~\cite{groenland2022perfect} proved the following theorem, answering this question in the negative.

\begin{theorem}[Groenland, Johnston, Radcliffe and Scott~\cite{groenland2022perfect}]\label{theorem_GJRS}
	For all $n\geq 6$ we have $U(n)<\binom{n}{2}$. In fact, $$U(n)\leq \frac{2}{3}\binom{n}{2}+O(n\log n).$$
\end{theorem}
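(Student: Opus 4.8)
The plan is to replace the naive recursion $U(n)\le U(n-1)+(n-1)$ by a more efficient one that inserts several new points at once. Recall why the naive recursion works: if $R$ is a transposition shuffle of $[m]$ fixing the points $m+1,\dots,m+k$, and $S$ is a product of lazy transpositions whose only effect is to move the new points around, then $SR$ is uniform on $S_{m+k}$ provided that, conditioned on the ``block pattern'' of $S$ (which old positions receive which new elements), the old positions are filled by a uniformly random bijection onto the remaining positions — and this last fact is automatic from the independence and uniformity of $R$. So the whole question becomes: using as few lazy transpositions as possible, produce a random $S$ that (i) moves only the new points, and (ii) realises, for each old position $j$ and new element $x$, the value $\PP(x\mapsto j)$ that a uniform permutation of $[m+k]$ would give.

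Concretely, I would fix a batch size $b$, write $n=qb+r$ with $r<b$, and build the shuffle in $q$ rounds: in round $t$ we already have a transposition shuffle of $\{1,\dots,tb\}$ and we insert $tb+1,\dots,tb+b$ with a gadget $G_t$. The key lemma to prove is that this insertion gadget can be realised using only about $\tfrac23\,b m$ lazy transpositions, where $m=tb$ is the current number of old points, rather than the $b m$ that $b$ separate single insertions would cost. The gadget should run as follows: first apply a (recursively constructed) transposition shuffle among the $b$ new points, at cost $U(b)$; then, instead of letting every old position interact with every new point, let each old position interact with only a $\tfrac23$-fraction of the new points, while the new elements are routed among the $b$ new positions — which play the role of ``hubs'' — using the pre-shuffle together with a few further internal swaps, so that after conditioning on which new element currently sits at which hub, each old position still sees a uniformly random new element with exactly the right probability. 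This $\tfrac23$ interaction density is what the smallest gadget that beats the trivial bound already forces, and it is what ultimately produces the constant in the theorem.

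Granting the lemma, the remaining computation is routine: summing $\tfrac23\,b\cdot(tb)$ over $t=0,1,\dots,q-1$ gives $\tfrac23\binom n2$ up to an error that is lower order; the $q$ pre-shuffles of the batches contribute $q\cdot U(b)$; and the leftover $r<b$ points are inserted naively at cost $O(n)$. Choosing $b$ appropriately (a slowly growing function of $n$ suffices; a constant would give a weaker error term) yields $U(n)\le\tfrac23\binom n2+O(n\log n)$. Finally, to get the clean statement $U(n)<\binom n2$ for all $n\ge 6$, one handles the finitely many small $n$ for which the asymptotic estimate is not yet below $\binom n2$ by hand — for those values it is enough to apply the gadget once and bound crudely, or to exhibit explicit short shuffles.

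The main obstacle, and the technical heart of the argument, is constructing the insertion gadget and proving its correctness. Arranging the marginals alone — that each new element ends up uniform on all $m+b$ positions and each old position receives a uniform element — is comparatively easy, even with a sparse interaction pattern; the difficulty is controlling the full joint distribution so that $SR$ is genuinely uniform on $S_{m+b}$. The natural route is to design the gadget so that, conditioned on the pre-shuffle of the new points and on the record of which transfers fired, $S$ restricted to the old positions is a \emph{fixed} injection into $[m+b]$ and the new positions are filled with whatever remains; then composing with the independent uniform $R$ on $[m]$ forces uniformity exactly as in the naive recursion. Making this conditioning clean while simultaneously keeping the number of transfer transpositions down to $\tfrac23\,bm$ is precisely where the care is needed, and where the value $\tfrac23$ gets pinned down.
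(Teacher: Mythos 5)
This theorem is not proved in the paper at all: it is quoted from Groenland, Johnston, Radcliffe and Scott, so there is no internal proof to compare your argument against. Judged on its own terms, your proposal has a genuine gap, and it sits exactly where you yourself locate ``the technical heart of the argument''. The key lemma --- that a batch of $b$ new points can be inserted into an existing shuffle of $m$ points using only about $\frac{2}{3}bm$ lazy transpositions --- is never constructed or verified. You describe what the gadget \emph{should} accomplish (each old position interacting with only a $\frac{2}{3}$-fraction of the new points, the new positions acting as ``hubs''), but you give no explicit sequence of transpositions, no probabilities, and no check that the resulting joint distribution composes with the old shuffle to give the uniform distribution on $S_{m+b}$. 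The constant $\frac{2}{3}$ is asserted (``is what the smallest gadget that beats the trivial bound already forces'') rather than derived; nothing in the write-up rules out $\frac{3}{4}$, or for that matter $\frac{1}{2}$. Since every other step --- the recursion, the summation over rounds, the choice of batch size, the finitely many small cases --- is routine once the gadget exists, the entire content of the theorem lives in the unproved lemma.

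Your overall architecture does match the known proof in spirit: Groenland, Johnston, Radcliffe and Scott obtain the theorem by improving the trivial bound $U_k(n)\leq kn-\binom{k+1}{2}$ for $(k,n)$-shuffles when $k\geq 3$ (this is exactly your ``insertion gadget'', since a $(k,n)$-shuffle composed with an independent shuffle of the remaining $n-k$ points yields a full shuffle), and then iterating. But that improved $(k,n)$-shuffle is an explicit, nontrivial construction, and it is where all the work and the constant $\frac{2}{3}$ actually come from. Until you produce such a construction and verify its correctness --- in particular the full joint distribution, not just the marginals, as you rightly flag --- the proposal is a plan for a proof rather than a proof.
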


A simple lower bound for $U(n)$ can be obtained from the observation that the product of $l$ lazy transpositions takes at most $2^l$ possible values in $S_n$, giving $U(n)\geq \log_2(n!)=\Theta(n\log n)$. Surprisingly, this is the best known lower bound, even though the argument above ignores uniformity and only uses that each permutation can be reached with positive probability. In fact, if we only want to achieve that the final permutation is `close' to uniform, but not necessarily exactly uniform, then it is enough to take $O(n\log n)$ lazy transpositions, as shown by Czumaj~\cite{czumaj2015random}.

While the result of Groenland, Johnston, Radcliffe and Scott~\cite{groenland2022perfect} answered the question of Fitzsimons~\cite{fitzsimons} and Angel and Holroyd~\cite{angel2018perfect}, and showed that the natural upper bound $\binom{n}{2}$ is not tight, there is still a large gap between the best known upper and lower bounds for $U(n)$. The authors of~\cite{groenland2022perfect} conjectured that in fact $U(n)=o(n^2)$, and also asked about improving the lower bound $\Omega(n\log n)$.

A natural approach to better understand the numbers $U(n)$ is to consider, for some integer $k\leq n$, the smallest possible number of (independent) lazy transpositions $T_1,\dots,T_l$ such that the product $T_1\dots T_l$ maps the elements $1,\dots,k$ uniformly to the $n(n-1)\dots (n-k+1)$ possible ordered $k$-tuples. Such a sequence is called a \emph{$(k,n)$-shuffle}, and the shortest possible length of a $(k,n)$-shuffle is denoted $U_k(n)$.


Note that we have $U_1(n)=n-1$ for all $n$. Applying this fact repeatedly (similarly to how we derived the bound $U(n)\leq \binom{n}{2})$, we easily get

\begin{equation}\label{equation_knshufflebound}
U_k(n)\leq kn-\binom{k+1}{2}.
\end{equation}
Moreover, any improvement on this upper bound for $U_k(n)$ gives an upper bound better than $\binom{n}{2}$ for $U(n)$. Groenland, Johnston, Radcliffe and Scott~\cite{groenland2022perfect} managed to improve~\eqref{equation_knshufflebound} for all $k\geq 3$ (and used this to obtain their upper bound for $U(n)$ in Theorem~\ref{theorem_GJRS}). 
For the case $k=2$, they conjectured that \eqref{equation_knshufflebound} cannot be improved.

\begin{conjecture}[Groenland, Johnston, Radcliffe and Scott~\cite{groenland2022perfect,groenland2022short}]\label{conjecture_knshufflefor2}
For all $n\geq 2$ we have $$U_2(n)=2n-3.$$
\end{conjecture}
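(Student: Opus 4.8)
The bound $U_2(n)\le 2n-3$ is immediate from \eqref{equation_knshufflebound} with $k=2$, so the task is the matching lower bound: every $(2,n)$-shuffle $T_1\cdots T_l$ has $l\ge 2n-3$. I would first normalise. Lazy transpositions with $p_i=0$ do nothing and can be deleted. A lazy transposition with $p_i=1$ is a fixed swap; conjugating preserves laziness and probabilities, so all such fixed swaps can be slid to one end of the product, where they multiply out to a fixed permutation that acts invertibly on the outcome and can be discarded (it at worst relabels the tracked pair, which does not change $U_2(n)$). Thus we may assume every $p_i\in(0,1)$. Writing $G$ for the multigraph on $[n]$ with an edge $\{a_i,b_i\}$ for each transposition, we may also assume $G$ is connected, since otherwise $\sigma(1)$ (with $\sigma=T_1\cdots T_l$) cannot be uniform on $[n]$; this already gives $l\ge n-1$.

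The point of the theorem is that one must improve $n-1$ to $2n-3$, and for this it is essential to use exact uniformity rather than mere reachability. The crude bound that the product of $l$ lazy transpositions takes at most $2^l$ values gives only $l=\Omega(\log n)$; connectivity of $G$ gives only $l\ge n-1$; and the fact that the set of elements the two tokens can occupy grows by at most one per transposition gives only $l\ge n-2$. That no such soft argument can suffice is already visible at $n=3$, where there is a $(2,3)$-shuffle of length $3$ whose transpositions use only the two edges $\{1,3\}$ and $\{2,3\}$, so $G$ may even be a tree. The plan is therefore induction on $n$. The cases $n\le 3$ follow from $2^l\ge n(n-1)$, which forces $l\ge 3=2n-3$ at $n=3$. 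For the step, given a normalised $(2,n)$-shuffle with $G$ connected, I would locate a vertex $v\notin\{1,2\}$ that can be removed to leave a $(2,n-1)$-shuffle built from at most $l-2$ of the transpositions; then $l-2\ge U_2(n-1)=2n-5$, so $l\ge 2n-3$. To find $v$ and carry out the removal I would follow the two tokens through the transpositions in the order they act — equivalently, regard the process as a Markov chain on ordered pairs of distinct elements — and exploit the rigid structure of how the shuffle must move token $1$ (a minimum-length single-token shuffle is essentially a spanning tree explored outward from the start vertex, with forced transition probabilities) to pin down where the transpositions incident to $v$ must lie in this order, then excise the last one touching $v$ together with one further transposition and re-route the remainder onto $[n]\setminus\{v\}$.

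The main obstacle is to verify that the deletion leaves an \emph{exact} $(2,n-1)$-shuffle — so that uniformity, not merely the support, is preserved — and that it really removes two transpositions and not just one. I expect this to need a local analysis of a single lazy transposition acting on a distribution (for instance: if $(a,b,p)$ sends a distribution on ordered pairs to the uniform one, then either $p=\tfrac12$ or the distribution was already uniform), together with the constraint that the conditional law of token $2$ given the entire trajectory of token $1$ must itself be uniform on the other $n-1$ points. A further difficulty is that $G$ need not have any low-degree vertex outside $\{1,2\}$ — it can be $3$-regular, or a path with its two ends at $1$ and $2$ — so the choice of $v$ has to be driven by the ordering of the transpositions rather than by $G$ alone, and handling this is where I expect the argument to be most delicate. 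Once the step is established, $U_2(n)=2n-3$ follows, confirming Conjecture~\ref{conjecture_knshufflefor2}.
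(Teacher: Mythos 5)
Your upper bound and the normalisations (deleting $p=0$ swaps, conjugating away $p=1$ swaps, connectivity giving $l\ge n-1$) are all fine, and your diagnosis that soft counting cannot reach $2n-3$ is correct. But the heart of your argument --- the inductive step --- is not carried out, and the difficulties you flag at the end are precisely the content of the theorem, not side issues. You need to show that from an \emph{arbitrary} $(2,n)$-shuffle of length $l$ one can extract a $(2,n-1)$-shuffle of length at most $l-2$, and nothing in the proposal establishes this. The appeal to ``the rigid structure of how the shuffle must move token $1$'' does not apply: you are proving a lower bound, so the given shuffle need not be minimal or have any tree-like single-token structure, and its transpositions incident to a chosen $v$ can be interleaved arbitrarily with the rest. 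Excising the transpositions touching $v$ and ``re-routing'' the remainder has no reason to preserve \emph{exact} uniformity of the pair distribution on $[n]\setminus\{v\}$ --- uniformity is a global linear condition on the whole product, and local surgery on the sequence generically destroys it. You acknowledge this (``the main obstacle is to verify that the deletion leaves an exact $(2,n-1)$-shuffle \dots and that it really removes two transpositions and not just one''), but offer no mechanism to overcome it. As it stands the proposal restates the problem rather than solving it.

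The paper's proof avoids induction entirely. It tracks the $n\times n$ matrix $M^{(t)}_{i,j}=\PP(\sigma_t(1)=i,\sigma_t(2)=j)$ after the first $t$ lazy swaps and uses the potential $f(t)=|S(t)|+\operatorname{rank}(M^{(t)})$, where $S(t)$ is the set of positions reachable by $1$ or $2$. One computes $f(0)=3$ and $f(l)=2n$ (the final matrix, constant off the diagonal and zero on it, has full rank), and shows each lazy swap $(a,b,p)$ changes $M$ by $M'=PMP+X$ with $\operatorname{rank}(X)\le 1$; crucially, the rank can only increase when $M_{a,b}\neq 0$ or $M_{b,a}\neq 0$, while $|S|$ can only increase when $M_{a,b}=M_{b,a}=0$, so $f$ increases by at most $1$ per step. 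If you want to salvage your approach you would need to prove the extraction lemma in full; otherwise I would recommend looking for an additive invariant of this kind, which sidesteps all of the surgery issues you identified.
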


Towards Conjecture~\ref{conjecture_knshufflefor2}, Groenland, Johnston, Radcliffe and Scott~\cite{groenland2022short} proved that if we only use lazy transpositions of the form $(1,b,p)$, then we need at least $1.6n-O(1)$. Moreover, they proved that if instead of uniformity we only ask for \emph{reachability}, i.e., if we only want to achieve that the elements $1,2$ 
may end up at any other pair $(i,j)$, then the minimal number of lazy transpositions required is $\lceil 3n/2\rceil-2$  -- demonstrating a gap between the reachability problem and the uniformity problem for pairs.

Our first result in this paper is a proof of Conjecture~\ref{conjecture_knshufflefor2}.

\begin{theorem}\label{theorem_12uniform}
	For all $n\geq 2$ we have
	
	$$U_2(n)=2n-3.$$
\end{theorem}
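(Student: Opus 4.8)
The upper bound $U_2(n)\le 2n-3$ is exactly inequality~\eqref{equation_knshufflebound} with $k=2$, so everything lies in the matching lower bound $U_2(n)\ge 2n-3$. The plan is to fix an arbitrary $(2,n)$-shuffle $T_1,\dots,T_\ell$ and watch the joint distribution of the positions of the two marked points evolve. Write $\sigma_t=T_t\cdots T_1$ and let $M_t\in\mathbb R^{n\times n}$ have entries $(M_t)_{ij}=\PP[\sigma_t(1)=i,\ \sigma_t(2)=j]$; applying the next lazy transposition $(a,b,p)$ replaces $M_t$ by the average $M_{t+1}=(1-p)M_t+p\,\Pi M_t\Pi$, where $\Pi$ is the transposition matrix of $(a\,b)$. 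We start from the rank-one matrix $M_0=e_1e_2^\top$ and must arrive at $M_\ell=\frac{1}{n(n-1)}(J-I)$. It is convenient also to record the marginals $u_t=M_t\mathbf 1$ (the law of $\sigma_t(1)$) and $v_t=M_t^\top\mathbf 1$ (the law of $\sigma_t(2)$), which evolve on their own by $u_{t+1}=(1-p)u_t+p\,\Pi u_t$ and similarly for $v$, and the ``covariance'' $N_t=M_t-u_tv_t^\top$, which is symmetric, has $N_0=0$, and must reach the negative semidefinite rank-$(n-1)$ matrix $N_\ell=-\frac1{n(n-1)}\bigl(I-\tfrac1nJ\bigr)$.

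The first ingredient is the bound $U_1(n)=n-1$ applied to each marked point separately. The support $\mathrm{reach}_1(t)$ of $u_t$ is $\{1\}$ at the start and $[n]$ at the end and can grow by at most one point per lazy transposition, so at least $n-1$ steps enlarge $\mathrm{reach}_1$; likewise at least $n-1$ steps enlarge $\mathrm{reach}_2$. If these two sets of ``growth steps'' were disjoint we would be done immediately. The obstacle — and the point at which genuine uniformity, not mere reachability, has to be used — is that one lazy transposition can enlarge both supports at once; indeed this is why the reachability analogue of the theorem has the strictly smaller answer $\lceil 3n/2\rceil-2$. The key phenomenon to exploit is that a step $(a,b,p)$ which simultaneously hands a new position to each marked point leaves $u_{t+1}$, and separately $v_{t+1}$, with its $a$- and $b$-entries in a fixed ratio determined by $p$; this rigid linkage between two coordinates of the marginals must be dismantled by some later transposition before the common target $\tfrac1n\mathbf 1$ can be met, and it is this extra work that the doubly-productive steps incur.

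Accordingly, the plan is to find a potential $\Phi_t$ assembled from $|\mathrm{reach}_1(t)|$, $|\mathrm{reach}_2(t)|$, and an ``entanglement'' term measuring how far $u_t,v_t$ still are from being fully spread out and unlinked — plausibly the rank of $N_t$, or the number of blocks in the coarsest partition of $[n]$ consistent with all linkages created so far — such that $\Phi_{t+1}\le\Phi_t+1$ for every $t$ while $\Phi_\ell-\Phi_0\ge 2n-3$; summing the per-step inequality then gives $\ell\ge\Phi_\ell-\Phi_0\ge 2n-3$. The real work, and what I expect to be the main obstacle, is to pin down the exact entanglement term and to verify, by a short case analysis on how the next transposition meets the two supports (growing neither, one, or both; and, in the last case, according to whether the two new positions coincide), that every gain of $2$ in $|\mathrm{reach}_1|+|\mathrm{reach}_2|$ is matched by a loss of $1$ in the entanglement term — with the uniformity of the target used precisely to rule out the configurations that would otherwise let a single step make net progress $2$. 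An alternative, possibly cleaner, route would be an induction on $n$ that deletes a carefully chosen position and splices the two trajectories through it; but arranging the deletion to discard two lazy transpositions again seems to need the uniformity hypothesis.
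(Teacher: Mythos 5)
There is a genuine gap: you have correctly identified the shape of the argument (the upper bound is indeed \eqref{equation_knshufflebound}, and the lower bound does come from a potential on the matrix $M_t$ of joint probabilities that increases by at most $1$ per lazy transposition, combining a reachability count with a linear-algebraic term), but the entire content of the proof is the choice of that potential and the verification of the per-step inequality, and you explicitly leave both as ``the real work''. Moreover, your candidate guesses do not work as stated. If you take $|\mathrm{reach}_1|+|\mathrm{reach}_2|$ plus $\rank(N_t)$, the total increase from start to finish is $(2n-2)+(n-1)=3n-3$, which exceeds $2n-3$; since a $(2,n)$-shuffle of length $2n-3$ exists, no such potential can increase by at most $1$ per step, so the bookkeeping has to be set up differently. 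The paper's invariant is $f(t)=|S(t)|+\rank(M^{(t)})$, where $S(t)$ is the \emph{single} set of positions reachable by $1$ \emph{or} $2$ (so $|S|$ goes from $2$ to $n$) and the rank is that of $M^{(t)}$ itself, not of a centred covariance (so it goes from $1$ to $n$), giving total increase exactly $2n-3$.

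The second missing ingredient is the rank step. Your update rule $M_{t+1}=(1-p)M_t+p\,\Pi M_t\Pi$ is correct but unhelpful on its own: it expresses $M_{t+1}$ as a sum of two matrices of equal rank, so a priori the rank could double. The non-obvious move is to write $M_{t+1}=PM_tP+X$ with $P=(1-p)I+p\Pi$, where $X=p(1-p)\,(w^{\top}M_tw)\,ww^{\top}$ for $w=e_a-e_b$ is a matrix of rank at most $1$ supported on the $\{a,b\}\times\{a,b\}$ block, and $X\neq 0$ only when $(M_t)_{a,b}+(M_t)_{b,a}\neq 0$. This gives $\rank(M_{t+1})\leq\rank(M_t)+1$ with equality only if $(M_t)_{a,b}$ or $(M_t)_{b,a}$ is nonzero, i.e.\ only if both $a$ and $b$ are already reachable; whereas $|S(t)|$ can grow only if exactly one of $a,b$ lies in $S(t)$, which forces $(M_t)_{a,b}=(M_t)_{b,a}=0$. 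These two equality conditions are mutually exclusive, which is precisely the ``case analysis'' you anticipated but did not carry out. Without identifying this invariant and this decomposition, the proposal is a plan rather than a proof.
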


Perhaps an even more natural question than determining $U_k(n)$ is as follows: rather than asking that one fixed $k$-tuple is mapped to each $k$-tuple with the same
probability, we ask that this should be true for \emph{every} $k$-tuple. More
precisely, given $1\leq k\leq n$, let $\UU_k(n)$ denote the shortest possible sequence of (independent) lazy transpositions $T_1,\dots,T_l$ such that whenever $1\leq x_1<x_2<\dots<x_k\leq n$, then the image of the $k$-tuple $(x_1,\dots,x_k)$ under $T_1\dots T_l$ is uniformly distributed among the $n(n-1)\dots(n-k+1)$ possible values. If $T_1,\dots,T_l$ are as above, let us say that they form a \emph{strong $(k,n)$-shuffle}. It is clear that $U_k(n)\leq \UU_k(n)\leq U(n)$ and $\UU_n(n)=U(n)$.

We start with the case $k=1$: thus we want to ensure that, for each $i$ and $j$, 
the probability that $i$ maps to $j$ is $1/n$. 
Our next result gives an essentially tight bound for $\UU_1(n)$. Note that
this bound, being $\Theta(n\log n)$, already matches the order of magnitude of the best known lower bound for $U(n)$.

\begin{theorem}\label{theorem_strongshufflefor1}
	For any positive integer $n$, we have
	$$\frac{1}{2}n\log_2 n\leq \UU_1(n)\leq \frac{1}{2}n\log_2 n+2n.$$
	Moreover, if $n$ is a power of $2$, then
	$$\UU_1(n)=\frac{1}{2}n\log_2 n.$$
\end{theorem}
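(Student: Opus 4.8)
The plan is to handle the lower bound by an entropy/counting argument and the upper bound by an explicit recursive construction, with the power-of-$2$ case being where the two bounds meet exactly.

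For the lower bound, fix a strong $(1,n)$-shuffle $T_1,\dots,T_l$ with parameters $(a_i,b_i,p_i)$. Build the bipartite "interaction" multigraph $G$ on vertex set $[n]$ with an edge $\{a_i,b_i\}$ for each $i$; the claim $\UU_1(n)\ge \frac12 n\log_2 n$ should follow from showing that for each vertex $v$, if $d_v$ is its degree in $G$ then $d_v\ge \log_2 n$, since then $l=\tfrac12\sum_v d_v\ge\tfrac12 n\log_2 n$. To see $d_v\ge\log_2 n$: consider the element starting at position $v$ and track, step by step, the set $S_t$ of positions it could be at after $t$ transpositions — each lazy transposition $T_i$ either does not involve the current support at all (so $S_t$ is unchanged) or involves exactly one vertex of the support (since the $a_i,b_i$ are a pair), so $|S_t|$ can at most double, and only the $d_v$ transpositions incident to the "trajectory" can cause growth. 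Since the final distribution is uniform on all $n$ positions we need $|S_l|=n$, giving $2^{d_v}\ge n$. (One must be slightly careful: the relevant count is the number of transpositions that are ever incident to the support as it moves, but this is at most the sum of degrees along the trajectory; the cleanest formalization is probably to argue that the number of distinct values the image of $v$ can take is at most $2$ to the power of the number of $T_i$ with $a_i$ or $b_i$ equal to some vertex reachable by $v$, and then to observe each individual element must reach all $n$ values.) This step — getting a clean per-element bound of $\log_2 n$ incident transpositions — I expect to be the main obstacle, because naively "doubling" over-counts; the right statement likely concerns, for each $i$, the set of start-vertices $v$ from which position is reachable, summed appropriately, or an averaging of $\log_2$ of support sizes.

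For the upper bound, I would first treat $n=2^m$ by induction on $m$, proving the exact bound $\UU_1(2^m)\le \tfrac12 2^m m = m2^{m-1}$. Split $[2^m]=A\sqcup B$ into two halves of size $2^{m-1}$. First use $2^{m-1}$ lazy transpositions, a perfect matching between $A$ and $B$ each with probability $1/2$, so that after this step every element is uniformly distributed between its own half and the corresponding partner in the other half — in particular each element is in $A$ with probability $1/2$ and in $B$ with probability $1/2$, and conditioned on which half, it is uniform on... — here one has to be careful: after one matching round an element is uniform on a $2$-element set, not on a whole half. The fix is to recurse: apply (in parallel on the two halves) a strong $(1,2^{m-1})$-shuffle inside $A$ and inside $B$, costing $2\cdot\big((m-1)2^{m-2}\big)=(m-1)2^{m-1}$ transpositions, and check that the composition "matching round, then independent shuffles on each half" makes every element uniform on all of $[2^m]$: an element lands in $A$ or $B$ each with probability $1/2$ by the matching round, and then the internal strong shuffle on whichever half it is in spreads it uniformly over that half, independently of which half — so overall uniform on $[2^m]$. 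Total: $2^{m-1}+(m-1)2^{m-1}=m2^{m-1}=\tfrac12 n\log_2 n$, as desired. The base case $m=0$ (or $m=1$) is trivial. For general $n$, pick $m$ with $2^{m-1}<n\le 2^m$; one approach is to embed $[n]$ into $[2^m]$, run the power-of-$2$ construction but with modifications so that the $2^m-n$ phantom points are never used, or more directly to redo the recursion with unequal halves and absorb the rounding loss into the additive $2n$ term — the $\tfrac12 n\log_2 n$ main term is unchanged and the accumulated $O(n)$ error over the $\log n$ levels, each contributing $O(1)$ per element, i.e.\ $O(n)$ total across all levels by a geometric-series estimate, stays within $2n$. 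I would double-check the constant in the additive term by running the recursion $\UU_1(n)\le \lceil n/2\rceil + \UU_1(\lceil n/2\rceil)+\UU_1(\lfloor n/2\rfloor)$ and bounding the solution.

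Finally, combining the two directions: the lower bound $\tfrac12 n\log_2 n$ and the upper bound $m2^{m-1}=\tfrac12 n\log_2 n$ coincide exactly when $n=2^m$, giving $\UU_1(2^m)=\tfrac12 2^m m$; and in general $\tfrac12 n\log_2 n\le \UU_1(n)\le \tfrac12 n\log_2 n+2n$. The one genuinely delicate point remains the lower bound's per-element doubling argument — ensuring the bookkeeping of which transpositions "count" against a given element is done without over- or under-counting — so in the write-up I would state and prove a clean lemma: \emph{for any sequence of lazy transpositions and any $v$, the image of $v$ takes at most $2^{c_v}$ values, where $c_v$ is the number of indices $i$ such that $\{a_i,b_i\}$ meets the set of positions reachable by $v$ using $T_1,\dots,T_{i-1}$}, and then observe $\sum_v c_v \le 2l$ (each transposition contributes to at most $2$ elements' counts — its two endpoints at that moment) together with $c_v\ge \log_2 n$ for all $v$.
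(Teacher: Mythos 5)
Your power-of-$2$ upper bound is correct and is essentially the paper's construction in recursive form (the paper phrases it as $\log_2 n$ rounds of hypercube-direction matchings, which is exactly ``match the two halves with probability $1/2$, then recurse''). The two other components of your proposal have genuine gaps, one of them fatal.

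The lower bound argument does not work, and cannot be repaired within the reachability framework you set up. The step ``$|S_t|$ can at most double'' is false: a transposition $(a_i,b_i)$ that meets the support $S_{t}$ of a single element in exactly one point adds at most \emph{one} new position to that support, so the support grows additively, not multiplicatively, and the correct conclusion from your bookkeeping is $c_v\geq n-1$, not $c_v\geq\log_2 n$. Worse, the compensating inequality $\sum_v c_v\leq 2l$ is also false: a late transposition $(a_i,b_i)$ meets the reachable set of \emph{every} $v$ that can reach $a_i$ or $b_i$ by time $i-1$, which near the end of the sequence is all $n$ elements, so that transposition contributes $n$ to the sum rather than $2$. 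More fundamentally, no argument that uses only which positions are reachable (ignoring the probabilities) can prove an $\Omega(n\log n)$ lower bound, because the reachability version of this exact problem is the ``gossiping dons'' problem, whose answer is $2n-4$ --- linear. The uniformity must enter. The paper's proof tracks the quantity $g(i)=\max_{\alpha\in S_n}\prod_{x=1}^{n}\PP(\sigma_i(x)=\alpha(x))$ (the heaviest transversal of the probability matrix) and shows that a single lazy transposition with parameter $p$ can shrink it by at most a factor $\max\{p^2,(1-p)^2\}^{-1}\leq 4$, comparing $\alpha$ with $(a,b)\alpha$; since $g$ must fall from $1$ to $n^{-n}$, this gives $4^l\geq n^n$, i.e.\ $l\geq\frac12 n\log_2 n$. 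You need an invariant of this multiplicative, probability-aware type.

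The general-$n$ upper bound is also under-specified at the one point where real work is needed. A perfect matching with probability $1/2$ only equidistributes between two halves of \emph{equal} size; with halves of sizes $k$ and $k+1$ the unmatched element stays put with probability $1$, so the recursion $\UU_1(n)\leq\lceil n/2\rceil+\UU_1(\lceil n/2\rceil)+\UU_1(\lfloor n/2\rfloor)$ is not justified, and it is not clear that unequal halves can be merged with only $\approx n/2$ swaps. The paper instead proves a merging lemma, $\UU_1(n+r)\leq\UU_1(n)+\UU_1(r)+n+r-1$, via a careful sequential construction of at most $n+r-1$ cross swaps with individually tuned probabilities, and then decomposes $n$ according to its binary expansion so that the total merging cost telescopes to at most $2n$; a balanced halving with a linear-cost merge at every level would instead add $\Theta(n\log n)$ and destroy the leading constant. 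So for the $+2n$ term you should either prove a cheap unequal-halves merge or switch to the binary-expansion decomposition.
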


We digress to mention an appealing geometric reformulation of this result. For the problem above, the natural `matrix of probabilities' has $i,j$ entry given by $\PP(i\textnormal{ maps to }j)$.
If we consider the columns of this matrix as vectors in $\mathbb{R}^n$, then it is easy to see that the problem is equivalent to the following. We start with $n$ independent vectors in $\mathbb{R}^n$, and at any step we replace two of the vectors, say $u$ and $v$, with the convex combinations $t u + (1-t) v$ and $(1-t) u + t v$ respectively. How many such steps are needed before we have mapped all the vectors to their centroid? The above theorem shows that the answer is about
$\frac{1}{2}n \log_2 n$. Interestingly, we do not see any `directly geometric' argument to establish this result, even approximately.

It is interesting to compare Theorem~\ref{theorem_strongshufflefor1} with the corresponding reachability problem, i.e., the minimal number of lazy transpositions required so that their product maps any $i$ to any $j$ with positive probability. This is equivalent to a well-known problem, sometimes called `gossiping dons' (see, e.g.,~\cite{bollobas2004extremal,bumby1981problem}), and the minimal number of lazy transpositions required is $2n-4$ for $n\geq 4$.

The next case is $k=2$: thus we are asking that every ordered pair maps to every other ordered pair with probability $\frac{1}{n(n-1)}$. Here 
one might expect that a quadratic number of lazy transpositions is needed. We prove that, surprisingly, $n(\log{n})^{O(1)}$ swaps still suffice.
\begin{theorem}\label{theorem_strongshufflefor2}
	We have
	$$\UU_2(n)=O(n\log^2 n).$$
\end{theorem}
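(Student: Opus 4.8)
The plan is to prove the bound by a divide-and-conquer recursion, establishing
$$\UU_2(n)\le 2\,\UU_2(\lceil n/2\rceil)+O(n\log n),$$
which unwinds (with the trivial base case $\UU_2(O(1))=O(1)$) to $\UU_2(n)=O(n\log^2 n)$, since $\sum_{t\le\log_2 n}2^t\cdot\frac{n}{2^t}\log_2\frac{n}{2^t}=\Theta(n\log^2 n)$. Split $[n]$ into two halves $A$ and $B$, of sizes $\lceil n/2\rceil$ and $\lfloor n/2\rfloor$. The recursion uses two ingredients: (i) a \emph{block-mixing shuffle} $M$, a sequence of $O(n\log n)$ lazy transpositions whose product $\sigma$ satisfies that, for \emph{every} pair of distinct elements $x_1,x_2$, the pair $(\textnormal{block of }\sigma(x_1),\ \textnormal{block of }\sigma(x_2))$ is distributed exactly as for a uniformly random permutation --- so $\PP(\sigma(x_1)\in A,\ \sigma(x_2)\in A)=\tfrac{|A|(|A|-1)}{n(n-1)}$, $\PP(\sigma(x_1)\in A,\ \sigma(x_2)\in B)=\tfrac{|A||B|}{n(n-1)}$, and likewise for the other two cases; and (ii) recursively constructed strong $(2,|A|)$- and strong $(2,|B|)$-shuffles $R_A,R_B$, acting only within $A$ and within $B$ respectively. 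The final sequence is $M$, followed by $R_A$, followed by $R_B$.

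The first step is to check that this composition is a strong $(2,n)$-shuffle. Fix distinct $x_1,x_2$ and a target ordered pair $(i,j)$ of distinct positions. Since $R_A$ and $R_B$ preserve the two blocks, the final position of $x_1$ lies in the same block as $\sigma(x_1)$, and similarly for $x_2$; so we may condition on the pair of blocks occupied by $x_1,x_2$ after $M$, which by (i) has the ``correct'' probability and is independent of the coins of $R_A,R_B$. If the two landed in different blocks, then $R_A$ and $R_B$ act independently, and each is in particular a strong $1$-shuffle, hence sends its element from \emph{any} starting position to a uniform position of its block; multiplying the block-pair probability by $\tfrac1{|A|}\cdot\tfrac1{|B|}$ (suitably for which block contains $i$) gives $\tfrac1{n(n-1)}$. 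If the two landed in the same block, say $A$, then $R_A$ is a strong $(2,|A|)$-shuffle, so regardless of the positions of $x_1,x_2$ after $M$ the final ordered pair of their positions is uniform over ordered pairs of distinct positions in $A$; multiplying by the block-pair probability again gives $\tfrac1{n(n-1)}$. Summing the four cases shows every ordered pair is hit with probability $\tfrac1{n(n-1)}$, as required, and the recursion follows; the bookkeeping when $n$ is odd is routine.

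The heart of the argument --- and the step I expect to be the main obstacle --- is the construction of the block-mixing shuffle $M$ using only $O(n\log n)$ lazy transpositions. A strong $1$-shuffle, available in length $O(n\log n)$ by Theorem~\ref{theorem_strongshufflefor1}, is \emph{not} enough by itself: it makes each single element land in a uniform block, but it gives no control over the \emph{joint} block-distribution of a pair, and natural constructions (e.g.\ a butterfly-type strong $1$-shuffle) leave nearby elements strongly correlated. The plan is to follow a strong $1$-shuffle by a short additional sequence --- morally a pairing-up of the positions of $A$ with the positions of $B$, contributing only $O(n)$ further lazy transpositions, together with possibly $O(n\log n)$ ``rotation-type'' transpositions to redistribute --- designed so that, for every pair of distinct elements, the probability that their positions get ``linked'' by this pairing is \emph{exactly} $\tfrac1{n-1}$; a direct computation then shows that the induced block-pair distribution is exactly the uniform one (and, in fact, all four required probabilities reduce to this single condition). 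Getting this probability to come out exactly right is the delicate point. One clean way to see that it is achievable: when $n$ is a prime power one can instead let $M$ realise a uniformly random affine map of $\mathbb{F}_n$, which sends every pair of distinct elements to a uniformly random ordered pair (so the block-coarsening is automatically correct), and show that such a map can be assembled from $O(n\log n)$ lazy transpositions; for general $n$ one uses a similar but more careful mixing construction. What remains is then the exact (but essentially mechanical) probability computations, the verification that the mixing sequence has length $O(n\log n)$, and the reduction of arbitrary $n$ to this framework.
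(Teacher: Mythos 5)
Your reduction is exactly the one the paper uses: the paper calls your block-mixing shuffle a \emph{division $(2,n)$-shuffle}, and your verification that a division shuffle followed by independent strong $(2,\cdot)$-shuffles on the two halves yields a strong $(2,n)$-shuffle is correct and matches the paper's inequality $\UU_2(n)\leq h(n)+2\UU_2(n/2)$. The problem is that you have left unproved precisely the step you yourself identify as ``the heart of the argument'': the existence of a block-mixing shuffle of length $O(n\log n)$. Neither of your two suggested routes works as stated. The affine-map route would realise a random permutation under which \emph{every} ordered pair maps to a uniform ordered pair, i.e.\ it would be a strong $(2,n)$-shuffle of length $O(n\log n)$; that is strictly stronger than the theorem you are proving and is in fact an open problem posed in this paper (whether $\UU_2(n)=\Theta(n\log n)$), so it cannot be waved through. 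The first route (strong $1$-shuffle plus an $O(n)$ ``pairing'') founders because a strong $(1,n)$-shuffle gives you no control whatsoever over the \emph{joint} distribution of the positions of two elements --- as you note, butterfly-type constructions leave them heavily correlated, and correlated in a way that depends on the starting pair --- so there is no single linking probability, and no $O(n)$ post-processing that you have exhibited, that corrects all $\binom{n}{2}$ starting pairs simultaneously. The claim that everything ``reduces to this single condition'' of linking probability exactly $\tfrac{1}{n-1}$ is not justified and, for a general strong $1$-shuffle, is false.

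What the paper does instead is to build the division shuffle itself recursively, which is the idea missing from your proposal. One strengthens the notion to a \emph{nice} division shuffle (a division shuffle that is simultaneously a strong $(1,n)$-shuffle, so that single-element block membership is under control when you recurse) and proves $h(2n)\leq 2h(n)+2n$: run nice division $(2,n)$-shuffles on the bottom half $[n]$ and the top half $[n+1,2n]$ separately, then apply the $n$ lazy transpositions $(i,n+i)$ with probability $q$ for $i\leq n/2$ and probability $1-q$ for $i>n/2$. The asymmetry between left and right is the crucial trick: for pairs starting in the same half the two sub-shuffles have already split them correctly between left and right, and one computes that the final same-block probability is $\tfrac14-\tfrac{1}{4(n-1)}(1-4q(1-q))$, while for pairs starting in different halves it is $\tfrac14-\tfrac1n q(1-q)$; choosing $q$ with $q(1-q)=\tfrac{n}{4(2n-1)}$ makes both equal to the required $\tfrac14-\tfrac{1}{4(2n-1)}$. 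Together with a short patch $h(n+2)\leq h(n)+2n+1$ for parities, this gives $h(n)=O(n\log n)$ and completes the argument. Without this (or some other genuine construction of the block-mixing shuffle), your proposal establishes only the conditional statement that such a shuffle would suffice.
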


Note that we clearly have $\UU_2(n)\geq \UU_1(n)$ for all $n$, so Theorem~\ref{theorem_strongshufflefor1} implies a lower bound $\UU_2(n)\geq \frac{1}{2}n\log_2{n}$. We have not managed to obtain any non-trivial improvement on this lower bound.

We can once again contrast Theorem~\ref{theorem_strongshufflefor2} with the corresponding reachability problem. As mentioned before, there exist lazy transpositions $T_1,\dots,T_l$ such that $l=\frac{3}{2}n+O(1)$ and $T_1\dots T_l$ maps $(1,2)$ to any pair $(i,j)$ (where $j\not =i$) with positive probability. Thus $T_l T_{l-1}\dots T_1$ maps each $(i,j)$ to $(1,2)$ with positive probability. So if we take an independent copy $T_1',\dots,T_l'$ of our sequence of lazy transpositions, we get that $T_1T_2\dots T_l T_l' T_{l-1}'\dots T_1'$ maps each $(i,j)$ to each $(i',j')$ with positive probability, giving an upper bound of $3n+O(1)$ for the reachability problem. It would be interesting to know what the asymptotic behaviour is.

\section{Tight bounds for \texorpdfstring{$(2,n)$}{(2,n)}-shuffles}
In this section we prove Theorem~\ref{theorem_12uniform} concerning lazy transpositions mapping the pair $(1,2)$ uniformly to all pairs. Our proof is motivated by a new proof of the lower bound for the corresponding reachability problem -- our short proof is different from the proof of Groenland, Johnston, Radcliffe and Scott~\cite{groenland2022short}, so we find it helpful to include it here. Let $R_2(n)$ denote the minimal number $l$ of transpositions $T_1,\dots,T_l$ such that for any $i$ and $j$ (distinct), there is a subsequence of $T_1,\dots,T_l$ whose product maps $(1,2)$ to $(i,j)$. (Equivalently, it is the minimal number of lazy transpositions such that their product maps $(1,2)$ to any $(i,j)$ with positive probability.)

\begin{theorem}[Groenland, Johnston, Radcliffe and Scott~\cite{groenland2022short}]\label{theorem_12reachable}
	For all $n\geq 2$ we have $$R_2(n)=\lceil 3n/2\rceil -2.$$
\end{theorem}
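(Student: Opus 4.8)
The plan is to prove the two bounds separately. For the lower bound $R_2(n) \geq \lceil 3n/2\rceil - 2$, I would track how the set of ``reachable pairs'' grows as we add transpositions $T_1, \dots, T_l$ one at a time. Think of the directed graph on vertex set $[n]$ whose edges are the ordered pairs $(i,j)$ such that some subsequence of $T_1, \dots, T_l$ maps $(1,2)$ to $(i,j)$; call such pairs \emph{reachable}. Initially only $(1,2)$ is reachable, and at the end all $n(n-1)$ ordered pairs must be reachable. The key is to find a potential function $\Phi$ measuring the ``deficiency'' of the current reachability structure, show $\Phi$ starts at roughly $2(n-2)$ and must reach $0$, and show that a single lazy transposition $(a,b,p)$ can decrease $\Phi$ by at most $1$ in ``most'' steps but is forced to decrease it by only $1$ (rather than $2$) sufficiently often. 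A natural candidate: let $S$ be the set of elements $x$ such that $x$ appears in \emph{some} reachable pair (as either coordinate), and more refined, track for how many $x$ \emph{both} some pair $(x,\cdot)$ and some pair $(\cdot,x)$ is reachable. The point is that when $S$ has size $m < n$, adding a transposition $(a,b,\cdot)$ can bring in at most one new element to $S$ (only if exactly one of $a,b$ was already in $S$), so we need at least $n-2$ transpositions just to ``touch'' every element; the extra $\lceil n/2 \rceil$ comes from a parity/pairing obstruction showing that at the moment an element first enters $S$ it can only be reachable ``on one side,'' and fixing the other side costs additional transpositions, but these can sometimes be shared two-at-a-time — hence the factor $1/2$ and the ceiling.

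For the matching upper bound $R_2(n) \leq \lceil 3n/2 \rceil - 2$, I would give an explicit construction. The idea is to first use about $n$ transpositions arranged so that $(1,2)$ can be routed to a large family of pairs — for instance, building a path or tree structure where successive transpositions $(1,3), (2,4), (3,5), (4,6), \dots$ let the two tokens ``walk'' along $[n]$ — and then a further $\approx n/2$ transpositions to clean up the remaining pairs (those where the two tokens would need to be in the ``wrong order'' or on the same side). Careful bookkeeping, splitting into cases according to the parity of $n$, should give exactly $\lceil 3n/2 \rceil - 2$; the construction in~\cite{groenland2022short} does this, but as the excerpt promises a shorter self-contained argument, I would aim for a cleaner explicit sequence, perhaps interleaving a ``fan'' from vertex $1$ with a ``fan'' from vertex $2$ plus a small number of linking transpositions.

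I expect the main obstacle to be the lower bound, specifically identifying the right potential function and proving the ``$\lceil n/2 \rceil$'' part rigorously — i.e., establishing that an amortized cost of $3/2$ per element is truly forced. The subtlety is that a single transposition can simultaneously add a new element to $S$ \emph{and} make progress elsewhere, so a crude ``each transposition helps by $1$'' argument only yields $\approx n$; one needs to show that the steps which add new elements are, in aggregate, unable to also resolve the ``one-sided'' deficiency, and that resolving two such deficiencies at once requires the two deficient elements to be linked in a way that itself consumed a transposition. Getting the constant exactly right (rather than $1.5n - O(1)$) and matching the ceiling will require a careful case analysis of what a transposition $(a,b,p)$ does depending on which of $a,b$ lie in $S$ and on their ``sidedness'' status, together with a clean invariant that survives to the end of the sequence.
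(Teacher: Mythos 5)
Your overall strategy matches the paper's: a potential function on the digraph of reachable ordered pairs for the lower bound, and an explicit ``two fans plus a matching'' construction for the upper bound (your final suggestion of a fan from $1$, a fan from $2$, and $\approx n/2$ linking transpositions is essentially the paper's construction: swaps $(x_i,y_i)$, then $(2,y_i)$, then $(1,x_i)$, then $(1,2)$, with the odd case handled by $R_2(n+1)\leq R_2(n)+2$). However, the lower bound, which you correctly identify as the crux, has a genuine gap: the potential you float does not work. If you take $\Phi=|S|+\frac{1}{2}\cdot\#\{x:\text{both some }(x,\cdot)\text{ and some }(\cdot,x)\text{ are reachable}\}$, then a single transposition $(a,b)$ with $a\notin S$ and $b$ already ``two-sided'' makes $a$ inherit both an in-edge and an out-edge from $b$, so $\Phi$ jumps by $1+\tfrac12=\tfrac32$ in one step, and the argument collapses. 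This is precisely the difficulty you describe in your last paragraph but do not resolve.

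The missing idea is to replace ``number of two-sided vertices'' by the size of a largest \emph{nice clique}: a set $X$ in which every two vertices are joined by a reachable pair in at least one direction, \emph{and} every vertex of $X$ has both positive in- and out-degree in the whole digraph. With $F=f_1+\tfrac12 f_2$ where $f_1=|S|$ and $f_2$ is the maximum nice clique size, the problematic step above is harmless: since $a$ was isolated before the swap, neither $(a,b)$ nor $(b,a)$ becomes reachable, so no nice clique of the new digraph contains both $a$ and $b$, and replacing $a$ by $b$ shows $f_2$ did not grow. When both $a,b$ were already in $S$, $f_1$ is unchanged and deleting $\{a,b\}$ from a maximum nice clique shows $f_2$ grows by at most $2$. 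This yields $F(t+1)\leq F(t)+1$ with $F(0)=2$ and $F(l)=3n/2$, giving the exact constant. The pairwise-adjacency requirement is what prevents the double counting your amortized framing would have to fight; without it (or an equivalent global accounting of which steps can ``share'' progress), your sketch only delivers a bound of roughly $n$.
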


For reasons that will soon become clear, our proof of the lower bound below is somewhat simpler and more intuitive if we assume that the first possible swap $T_l$ is $(1,2)$ -- equivalently, if we work with the modified problem where we consider unordered pairs $\{i,j\}$ instead of ordered ones. We recommend that the reader focuses on this case.

Let $T_1,\dots, T_l$ be a sequence of transpositions such that for any $(i,j)$, the pair $(1,2)$ is mapped to $(i,j)$ under some subsequence. Observe that all the relevant information about the first $t$ possible swaps $T_{l-t+1},T_{l-t+2},\dots,T_l$ is carried by a directed graph $G_t$ on vertex set $[n]$ with edges
$$E(G_t)=\{(i,j):\textnormal{there is a subsequence of $T_{l-t+1},\dots,T_l$ mapping $(1,2)$ to $(i,j)$}\}.$$
(In the simplified setting, this becomes an undirected graph.) We can describe the result of adding an additional transposition $T_{l-t}=(a,b)$ (to the beginning of this sequence) in terms of these graphs. Indeed, $E(G_{t+1})$ consists of the following edges.
\begin{itemize}
	\item All edges in $E(G_t)$;
	\item All pairs $(a,j)$ such that $(b,j)\in E(G_t)$, $j\not =a$;
	\item All pairs $(j,a)$ such that $(j,b)\in E(G_t)$, $j\not =a$;	
	\item All pairs $(b,j)$ such that $(a,j)\in E(G_t)$, $j\not =b$;
	\item All pairs $(j,b)$ such that $(j,a)\in E(G_t)$, $j\not =b$;
	\item The edges $(a,b)$ and $(b,a)$, provided that at least one of them appear in $G_t$.
\end{itemize}

Our proof relies on finding an appropriate invariant based on these digraphs. For any digraph $G$, let $f_1(G)$ denote the number of vertices in $G$ that appear in an edge (i.e., have positive in-degree or positive out-degree). Also, let us say that $X\subseteq V(G)$ is a \emph{nice clique} if for all $x,x'\in X$ (with $x'\not =x$), at least one of $(x,x')$ and $(x',x)$ appear in $E(G)$, and furthermore every $x\in X$ has both positive in-degree and positive out-degree in $G$ (but not necessarily in $X$). (In the simplified setting, this is just the usual notion of a clique in a simple graph.) Let us write $f_2(G)$ for the maximal size of a nice clique in $G$. The next lemma shows that $F(t)=f_1(G_t)+\frac{1}{2}f_2(G_t)$ increases by at most $1$ in each step. Note that this immediately gives the tight lower bound $\lceil 3n/2\rceil -2$, as $G_0$ is a single directed edge, and $G_l$ is a complete directed graph on $n$ vertices.

\begin{lemma}\label{lemma_12reachable}
	For all $0\leq t\leq l-1$, we have
	$$F(t+1)\leq F(t)+1.$$
\end{lemma}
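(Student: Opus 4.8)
The plan is to track how the two quantities $f_1(G_t)$ and $f_2(G_t)$ can each change when we prepend a single transposition $T_{l-t}=(a,b)$, using the explicit description of $E(G_{t+1})$ in terms of $E(G_t)$ given above. The key structural observation is that every new edge created involves $a$ or $b$: so the set of vertices ``touched'' (having positive in- or out-degree) can only grow by including $a$ and/or $b$ that were previously isolated. Hence $f_1$ increases by at most $2$, and it increases by exactly $2$ only in the degenerate situation where \emph{both} $a$ and $b$ were isolated in $G_t$ --- but then no edge of $G_t$ is incident to $a$ or $b$, so the only new edge is $(a,b)$ (or $(b,a)$) if one of them was already present, which is impossible since both are isolated; so in fact $E(G_{t+1})=E(G_t)$ and $f_1$ does not change at all. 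Thus we always have $f_1(G_{t+1})\le f_1(G_t)+1$, with the extra refinement that if $f_1$ does go up by exactly $1$ then exactly one of $a,b$ was isolated before --- say $b$ --- and it becomes touched only through edges of the form $(a,b)$, $(b,a)$, $(b,j)$ or $(j,b)$ mirroring existing edges at $a$.

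The main work is then to show that in the case where $f_1$ increases by $1$, the quantity $f_2$ does not increase at all, so that $F$ goes up by at most $1$; and conversely, if $f_2$ increases (necessarily by at most $2$, as I explain below), then $f_1$ does not increase. For the first case: suppose $b$ was isolated and becomes touched. I claim any nice clique $X$ in $G_{t+1}$ is also a nice clique in $G_t$. The point is that if $b\in X$, then since $b$ had in-degree (and out-degree) zero in $G_t$, its only in- and out-edges in $G_{t+1}$ are $(b,a),(a,b)$ and edges $(b,j),(j,b)$ that are ``copies'' of edges $(a,j),(j,a)\in E(G_t)$; but then replacing $b$ by $a$ in $X$ gives a nice clique of the same size in $G_t$ --- unless $a$ is already in $X$, in which case $X\setminus\{b\}\cup\{a\}=X\setminus\{b\}$ has size $|X|-1$, and I need a slightly more careful argument (one shows $a$ and $b$ are ``twins'' so $X\setminus\{b\}$ extends, or handles this small loss directly). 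The cleanest route is probably: show that if $X$ is a nice clique in $G_{t+1}$ with $b\in X$, then $(X\setminus\{b\})\cup\{a\}$ is a nice clique in $G_t$ of size at least $|X|-1$ when $a\in X$ and exactly $|X|$ when $a\notin X$, and separately check that when $a\in X$ the original $X$ actually forces $a,b$ to behave identically so no loss occurs; alternatively argue that $f_2(G_{t+1})\le f_2(G_t)$ directly by this swapping. In the case where $f_1$ stays constant (so both $a,b$ were already touched, or were both isolated and nothing changed), one shows instead $f_2(G_{t+1})\le f_2(G_t)+2$: a new nice clique can gain at most the two vertices $a,b$ over an old one (again by the swapping argument, every vertex of a new nice clique other than $a,b$ already sat in a nice clique of $G_t$ together with the ``shadows'' at $a$ and $b$).

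I would organize the proof as a short case analysis on whether $\{a,b\}\subseteq\{$touched vertices of $G_t\}$. In the case both are already touched: $f_1$ is unchanged and $f_2$ grows by at most $2$, giving $\Delta F\le 0+\tfrac12\cdot 2=1$. In the case exactly one, say $b$, is newly touched: $f_1$ grows by at most $1$, and the swapping argument gives $f_2$ unchanged, so $\Delta F\le 1+0=1$. In the case neither is touched: as noted $G_{t+1}=G_t$ and $\Delta F=0$. The main obstacle is the bookkeeping in the swapping argument when $a$ and $b$ are both in a candidate nice clique of $G_{t+1}$ --- one must verify that the edge structure forces enough symmetry between $a$ and $b$ that no size is lost, using the ``at least one of $(x,x')$, $(x',x)$ is present'' condition carefully together with the requirement that every clique vertex have positive in- and out-degree. (In the simplified undirected setting recommended to the reader this is transparent: a clique in $G_{t+1}$ containing the newly-touched vertex $b$ must, after deleting $b$, still be a clique, and adding $a$ back recovers a clique of $G_t$ of the same size since $b$'s neighbours in $G_{t+1}$ are exactly $\{a\}\cup N_{G_t}(a)$; the directed case is the same idea with more indices to chase.) Once the three cases are dispatched, the conclusion $F(t+1)\le F(t)+1$ is immediate.
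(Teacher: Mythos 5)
Your overall strategy --- casing on how many of $a,b$ already appear in an edge of $G_t$, bounding $f_1$ and $f_2$ separately, and transferring a maximal nice clique of $G_{t+1}$ back to $G_t$ by swapping the newly-touched vertex for its ``twin'' --- is the same as the paper's, and the cases where both or neither of $a,b$ are already touched go through essentially as you describe. But in the critical middle case (say $b$ isolated in $G_t$ and $a$ not) you explicitly leave open the subcase in which a maximal nice clique $X$ of $G_{t+1}$ contains \emph{both} $a$ and $b$, where your replacement $(X\setminus\{b\})\cup\{a\}$ loses a vertex; you gesture at a ``twins'' repair without carrying it out. This is a genuine gap, and it matters quantitatively: if that subcase could occur you would only get $f_2(G_{t+1})\le f_2(G_t)+1$ there, hence $F(t+1)\le F(t)+\tfrac{3}{2}$, which degrades the final lower bound from $\lceil 3n/2\rceil-2$ to roughly $n$.

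The missing observation is that the subcase is vacuous. If $b$ is isolated in $G_t$, then neither $(a,b)$ nor $(b,a)$ lies in $E(G_{t+1})$: the rule adding the edges $(a,b),(b,a)$ requires at least one of them to be present already in $G_t$ (impossible, since $b$ is isolated), and the ``copying'' rules could only produce $(a,b)$ or $(b,a)$ from a loop at $a$ or $b$, which does not exist. Hence no nice clique of $G_{t+1}$ contains both $a$ and $b$, so either $b\notin X$ (and then $X$ is itself a nice clique of $G_t$, because any edge of $G_{t+1}$ between some $x\ne b$ and $b$ is a copy of an edge of $G_t$ between $x$ and $a$, so positive in- and out-degrees are preserved), or $b\in X$ and $a\notin X$, in which case $(X\setminus\{b\})\cup\{a\}$ is a nice clique of $G_t$ of the \emph{same} size. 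Either way $f_2(G_{t+1})=f_2(G_t)$ and $F$ increases by at most $1$. Your parenthetical claim in the undirected setting that $b$'s neighbourhood in $G_{t+1}$ equals $\{a\}\cup N_{G_t}(a)$ is the source of the trouble: it is in fact exactly $N_{G_t}(a)$, without $a$, and that is precisely what makes the swap lossless.
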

\begin{proof}
	Let $T_{l-t}$ be the transposition $(a,b)$. For all $i$, let $S_i$ denote the set of vertices in $G_i$ appearing in an edge. Observe that
	\begin{equation*}
		S_{t+1}=\begin{cases*}
			S_t & if $a,b$ are both elements or both non-elements of $S_t$\\
			S_t\cup\{a\} & if $a\not \in S_t$, $b\in S_t$\\
			S_t\cup\{b\}& if $b\not \in S_t$, $a\in S_t$.
		\end{cases*}
	\end{equation*}
In particular, $f_1(G_{t+1})\leq f_1(G_t)$+1. Let us take a nice clique $X\subseteq [n]$ of maximal possible size $f_2(G_{t+1})$ in $G_{t+1}$.

Assume first that $f_1(G_{t+1})=f_1(G_t)+1$. Then, without loss of generality, $a\not \in S_t$ and $b\in S_t$. Hence $(a,b),(b,a)\not \in E(G_{t+1})$ and so $\{a,b\}\not\subseteq X$. If $a\not \in X$, then $X$ is a nice clique in $G_t$, thus $f_2(G_t)=f_2(G_{t+1})$, giving the result. However, if $a\in X$, then $(X\setminus \{a\})\cup\{b\}$ is a nice clique in $G_t$, once again giving $f_2(G_t)=f_2(G_{t+1})$ and hence $F_2(t+1)=F_2(t)+1$.

Now assume that $f_1(G_{t+1})=f_1(G_t)$. Observe that $X\setminus\{a,b\}$ is a nice clique in $G_{t}$, so we immediately get $f_2(G_t)\geq f_2(G_{t+1})-2$ and the result follows.
\end{proof}

\begin{proof}[Proof of Theorem~\ref{theorem_12reachable}]
	The lower bound $R_2(n)\geq \lceil 3n/2\rceil -2$ follows immediately from Lemma~\ref{lemma_12reachable}, since $F(0)=2$ and $F(l)=3n/2$.
	
	For the upper bound, it is easy to see that $R_2(n+1)\leq R_2(n)+2$ for all $n$, so it suffices to show that $R_2(n)\leq 3n/2-2$ when $n$ is even. Let $X=\{x_1,\dots,x_{n/2-1}\}$ and $Y=\{y_1,\dots,y_{n/2-1}\}$ partition $[n]\setminus\{1,2\}$. Let $T$ be the swap $(1,2)$, furthermore, let $T_i$ denote the transposition $(1,x_i)$, and similarly let $T_i'$ denote $(2,y_i)$. Finally, let $W_i$ denote $(x_i,y_i)$. It is easy to check that
	$$W_1,W_2,\dots W_{n/2-1},T_1',T_2',\dots T_{n/2-1}',T_1,T_2,\dots T_{n/2-1},T$$
	has a subsequence mapping $(1,2)$ to $(i,j)$, for any pair $(i,j)$. The result follows.
\end{proof}

Let us now turn to the proof of Theorem~\ref{theorem_12uniform}, showing $U_2(n)=2n-3$. For this problem, the random permutation obtained after $t$ swaps can be described by a weighted directed graph -- i.e., a matrix (with zeros on the diagonal). It turns out that a similar proof works if we replace the invariant $f_2(t)$ by the rank of this matrix.\begin{proof}[Proof of Theorem~\ref{theorem_12uniform}]
	The upper bound follows from \eqref{equation_knshufflebound}, so it is enough to prove the lower bound $U_2(n)\geq 2n-3$. Assume that $T_1,\dots,T_l$ is a $(2,n)$-shuffle. For each $1\leq t\leq l$, let us write $\sigma_t=T_{l-t+1}T_{l-t+2}\dots T_l$ for the random permutation obtained from the first $t$ lazy swaps, and let $\sigma_0$ be the identity. So we know that $\sigma_l$ is uniformly distributed in $S_n$. For each $t$, we form a matrix $M^{(t)}$ with entries
	
	$$M^{(t)}_{i,j}=\PP(\sigma(1)=i,\sigma(2)=j).$$
	Note that the matrix $M^{(t)}$ carries all the relevant information coming from the first $t$ lazy transpositions. Moreover, we have
\begin{equation*}
	M^{(0)}=
\left(
{\begin{array}{ccccc}
		0 & 1 & 0 & \dots & 0\\
		0 & 0 & 0 & \dots & 0\\
				0 & 0 & 0 & \dots & 0\\
		&&\dots&&\\
		0&0&0&\dots&0
\end{array} } \right),
\end{equation*}
and
\begin{equation*}
	M^{(l)}=
	\left(
	{\begin{array}{ccccc}
			0 & x & x & \dots & x\\
			x & 0 & x & \dots & x\\
			x & x & 0 & \dots & x\\
			&&\dots&&\\
			x&x&x&\dots&0
	\end{array} } \right),
\end{equation*}
where $x=\frac{1}{n(n-1)}$.

For each $t$, let $S(t)\subseteq [n]$ be the set of all $i\in[n]$ such that the $i$th row or the $i$th column of $M^{(t)}$ contains a non-zero element. In other words, $S(t)$ consists of elements in $[n]$ that can be reached by $1$ or $2$ after $t$ lazy swaps. Let us also write
$$f(t)=|S(t)|+\rank(M^{(t)}).$$
It is easy to see that $f(0)=3$ and $f(l)=2n$. So
the result follows immediately from the following claim.

\textbf{Claim.} For each $0\leq t\leq l-1$, we have $$f(t+1)\leq f(t)+1.$$

\textbf{Proof.} Fix some $t$ with $0\leq t\leq l-1$, and let us write $M=M^{(t)}$, $M'=M^{(t+1)}$. Let $T_{l-t}$ have parameters $(a,b,p)$ (where $a\not =b$). Observe that we have 

\begin{equation*}
	M'_{i,j}=
	\begin{cases*}
		M_{i,j} & if $i\not \in \{a,b\}$ and $j\not \in \{a,b\}$,\\
		(1-p)M_{i,j}+pM_{\bar{i},j}& if $i\in\{a,b\}$, $j\not \in\{a,b\}$ and $\{a,b\}=\{i,\bar{i}\}$,\\
		(1-p)M_{i,j}+pM_{i,\bar{j}}& if $i\not\in\{a,b\}$, $j \in\{a,b\}$ and $\{a,b\}=\{j,\bar{j}\}$,\\
		0&if $i=j\in\{a,b\}$,\\
		(1-p)M_{i,j}+pM_{j,i}& if $\{i,j\}=\{a,b\}$.
	\end{cases*}
\end{equation*}

Let $P$ be the following $n\times n$ matrix:
\begin{equation*}
	P_{i,j}=
	\begin{cases*}
	1 & if $i=j\not \in \{a,b\}$,\\
	0 & if $i\not =j$ and $|\{i,j\}\cap\{a,b\}|\leq 1$,\\
	1-p & if $i=j\in\{a,b\}$,\\
	p & if $\{i,j\}=\{a,b\}$.
	\end{cases*}
\end{equation*}
For example, if $a=1$ and $b=2$, then $P$ is given by
\begin{equation*}
	P=
	\left(
	{\begin{array}{cccccc}
			1-p & p & 0 & 0 &\dots & 0\\
			p & 1-p & 0 & 0 & \dots & 0\\
			0 & 0 & 1 &  0 &\dots & 0\\
			0&0&0&1&\dots&0\\
			0&0&0&0&\dots&1
	\end{array} } \right).
\end{equation*}
Using that $M_{a,a}=M_{b,b}=0$, we see that the matrix $PMP$ has entries given by

\begin{equation*}
	(PMP)_{i,j}=
	\begin{cases*}
		M_{i,j} & if $i\not \in \{a,b\}$ and $j\not \in \{a,b\}$,\\
		(1-p)M_{i,j}+pM_{\bar{i},j}& if $i\in\{a,b\}$, $j\not \in\{a,b\}$ and $\{a,b\}=\{i,\bar{i}\}$,\\
		(1-p)M_{i,j}+pM_{i,\bar{j}}& if $i\not\in\{a,b\}$, $j \in\{a,b\}$ and $\{a,b\}=\{j,\bar{j}\}$,\\
		p(1-p)(M_{a,b}+M_{b,a})&if $i=j\in\{a,b\}$,\\
		(1-p)^2M_{i,j}+p^2M_{j,i}& if $\{i,j\}=\{a,b\}$.
	\end{cases*}
\end{equation*}
So we have $M'=PMP+X$, where
\begin{equation*}
X_{i,j}=
\begin{cases*}
-p(1-p)(M_{a,b}+M_{b,a})&if $i=j\in\{a,b\}$,\\
p(1-p)(M_{a,b}+M_{b,a})& if $\{i,j\}=\{a,b\}$,\\
0& otherwise.
\end{cases*}
\end{equation*}

Observe that $\rank(X)\leq 1$, and if $\rank(X)=1$ then $M_{a,b}\not =0$ or $M_{b,a}\not =0$. Since $M'=PMP+X$, it follows that $\rank(M')\leq \rank(M)+1$, with equality only if $M_{a,b}\not =0$ or $M_{b,a}\not =0$.

On the other hand, it is easy to see that for each $i\not \in\{a,b\}$, we have $i\in S(t+1)$ if and only if $i\in S(t)$. Furthermore, $a\in S(t+1)$ only if $a\in S(t)$ or $b\in S(t)$, and similarly $b\in S(t+1)$ only if $a\in S(t)$ or $b\in S(t)$. It follows that $|S(t+1)|\leq |S(t)|+1$, with equality only if exactly one of $a$ and $b$ belong to $S(t)$. In particular, we need $M_{a,b}=M_{b,a}=0$ for equality.

It follows that $|S(t+1)|+\rank(M')\leq |S(t)|+\rank(M)+1$, proving the claim and hence the theorem.
\end{proof}
\section{Strong shuffles}
In this section we prove Theorems~\ref{theorem_strongshufflefor1} and~\ref{theorem_strongshufflefor2} about strong $(k,n)$-shuffles, i.e., when we want all $k$-tuples to have uniformly random image. We will begin with the case $k=1$.
\subsection{Strong \texorpdfstring{$(1,n)$}{(1,n)}-shuffles}
Let us start by giving a construction of a strong $(1,n)$-shuffle of length $\frac{1}{2}n\log_2{n}$ for the case when $n$ is a power of $2$. That is, we show that $\frac{1}{2}n\log_2{n}$ lazy swaps suffice to make the image of every element uniform. We arrange the $n=2^t$ elements of $[n]$ along the vertices of a $t$-dimensional hypercube $\{0,1\}^t$. Then our lazy swaps come in $t$ phases, each of length $2^{t-1}$, with the $i$th phase consisting of all possible swaps in direction $i$. That is, if $e_i$ denotes the $t$-dimensional unit vector with $i$th coordinate $1$, then in the $i$th phase we perform all the lazy swaps $(v,v+e_i,1/2)$ (with $v\in\{0,1\}^t$ satisfying $v_i=0$), in an arbitrary order. It is easy to check that after these $2^{t-1}t$ lazy transpositions, every point can end up everywhere else with probability exactly $1/2^t$, giving the upper bound
$$\UU_1(2^t)\leq 2^{t-1}t.$$

	
This is in fact the exact value of $\UU_1(2^t)$. To prove this, we establish the
corresponding lower bound.
	
\begin{lemma}\label{lemma_strongshufflelowerbound}
	For any positive integer $n$, we have $\UU_1(n)\geq \frac{1}{2}n\log_2 n$.
\end{lemma}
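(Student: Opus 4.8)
The plan is to track, as the lazy transpositions are applied (in reverse order, so that we read off the cumulative permutation $\sigma_t = T_{l-t+1}\cdots T_l$), an information-theoretic or entropy-like quantity associated with the $n\times n$ matrix $M^{(t)}$ whose $(i,j)$ entry is $\PP(\sigma_t(j)=i)$. At time $0$ this matrix is the identity and at time $l$ it is the all-$\frac1n$ matrix (the rank-one ``centroid'' matrix). Since each column of $M^{(t)}$ is a probability vector, a natural candidate is to sum a concave function of the entries over each column; the key requirement is that the function should be additive over the coordinatewise description given in the \texorpdfstring{$(2,n)$}{(2,n)}-shuffle section, i.e., when a swap $(a,b,1/2)$ acts, two rows $a,b$ of the matrix are each replaced by their average $\tfrac12(\text{row}_a + \text{row}_b)$.

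Concretely, I would consider for a probability vector $x=(x_1,\dots,x_n)$ the quantity $\Phi(x) = \sum_i g(x_i)$ for a suitable $g$, and set $F(t) = \sum_{j=1}^n \Phi(\text{column }j\text{ of }M^{(t)})$. A swap $(a,b,1/2)$ replaces $(x_a,x_b)$ in each column by $(\tfrac{x_a+x_b}{2},\tfrac{x_a+x_b}{2})$, so the change in $F$ is $\sum_j \big[2 g\!\big(\tfrac{x_a^{(j)}+x_b^{(j)}}{2}\big) - g(x_a^{(j)}) - g(x_b^{(j)})\big]$. The plan is to pick $g$ so that this is bounded below by some fixed negative constant (ideally $-1$), \emph{and} so that $F$ changes by a total of $\ge \frac12 n\log_2 n$ between the start and end configurations. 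The function $g(x) = -x\log_2 x$ is the obvious first guess: then $\Phi$ is the Shannon entropy of a column, $F(0) = 0$ (each column is a standard basis vector, entropy $0$) and $F(l) = n\log_2 n$ (each column uniform, entropy $\log_2 n$). The concavity of $-x\log_2 x$ guarantees $F$ is nondecreasing, but to get the right constant we must show a \emph{single} swap increases total entropy by at most $2$ — then $l \ge \tfrac12 F(l) = \tfrac12 n\log_2 n$. This bound ``each swap adds $\le 2$ bits of entropy'' is exactly the kind of statement that should follow because averaging two coordinates in a probability vector changes its entropy by at most the entropy of a fair coin toss contribution per column, but the swap touches all $n$ columns simultaneously, so I'd need a combined bound; the cleanest route is to note $\sum_j (x_a^{(j)} + x_b^{(j)}) = 2$ (rows sum to... wait, columns sum to $1$, but the $a$-entries across all columns sum to $1$, and same for $b$), and apply a grouping/concavity inequality — effectively the entropy increase is at most $H$ of the distribution obtained by mixing ``row $a$'' and ``row $b$'' viewed as two probability distributions on $[n]$, which is at most $1 + \text{(average of the two entropies)} - \text{(average)} \le $ something — I expect it comes out to exactly $\le 1$ per swap under the right accounting, giving $l \ge n\log_2 n$, which is too strong and hence wrong; so the correct per-swap bound must genuinely be $2$, and pinning down why requires care.

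The main obstacle, then, is proving the sharp per-step bound: that applying one lazy transposition increases $F(t) = \sum_j \Phi(M^{(t)}_{\cdot,j})$ by at most $2$, with the extremal behaviour matching the hypercube construction (where in each of the $t$ phases, $n/2$ swaps together must account for $n$ units of entropy growth, i.e.\ exactly $2$ per swap on average, and one checks it is exactly $2$ per swap). I would analyze a single swap $(a,b,1/2)$ acting on the pair of rows $r_a, r_b \in \mathbb{R}^n$ (each a probability vector): the entropy change is $2\Phi\!\big(\tfrac{r_a+r_b}{2}\big) - \Phi(r_a) - \Phi(r_b) = 2 H\!\big(\tfrac{r_a+r_b}{2}\big) - H(r_a) - H(r_b)$. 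By the standard concavity/mixing identity, $H\!\big(\tfrac{r_a+r_b}{2}\big) = \tfrac12 H(r_a) + \tfrac12 H(r_b) + D$ where $D$ is the Jensen–Shannon-type divergence term, so the change equals $2D$, and one must show $D \le 1$, i.e.\ $H\!\big(\tfrac{r_a+r_b}{2}\big) \le \tfrac12 H(r_a) + \tfrac12 H(r_b) + 1$ — but that is just subadditivity: $\tfrac{r_a+r_b}{2}$ is the distribution of a pair (fair bit $\epsilon$, then $r_a$ or $r_b$), whose entropy is at most $1 + \tfrac12 H(r_a) + \tfrac12 H(r_b)$. This gives $2D \le 2$, exactly the bound we want. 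So the skeleton is: (i) set up $M^{(t)}$ and $F(t) = \sum_j H(\text{col }j)$; (ii) show $F(0)=0$, $F(l) = n\log_2 n$; (iii) show each swap increases $F$ by at most $2$ via the subadditivity argument above; (iv) conclude $l \ge \tfrac12 n\log_2 n$. I expect step (iii) — getting the constant exactly $2$ and not accidentally $1$, and being careful that the swap is the specific mix $\tfrac12 r_a + \tfrac12 r_b$ rather than a general $t$-mixture (for a general $(a,b,p)$ swap the bound becomes $2 H(p) \le 2$, same conclusion) — to be the delicate part, though it is ultimately a clean entropy inequality.
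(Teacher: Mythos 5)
Your argument is correct, and it is genuinely different from the one in the paper. You track the total Shannon entropy $F(t)=\sum_{j}H(\textnormal{column } j \textnormal{ of } M^{(t)})$, where $M^{(t)}_{i,j}=\PP(\sigma_t(j)=i)$; a lazy swap $(a,b,p)$ replaces rows $r_a,r_b$ by the mixtures $(1-p)r_a+pr_b$ and $pr_a+(1-p)r_b$, and since each row sums to $1$ (as $\sigma_t$ is almost surely a bijection), the resulting change in $F$ is exactly $H((1-p)r_a+pr_b)+H(pr_a+(1-p)r_b)-H(r_a)-H(r_b)\leq 2H_2(p)\leq 2$ by the grouping/subadditivity inequality you cite. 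With $F(0)=0$ and $F(l)=n\log_2 n$ this gives $l\geq\frac{1}{2}n\log_2 n$, as required; your intermediate worry about whether the per-swap bound is $1$ or $2$ resolves correctly to $2$ (the Jensen--Shannon term is at most one bit \emph{per new row}, and there are two), and nothing else is missing. The paper instead uses a ``heaviest transversal'' invariant, $g(i)=\max_{\alpha\in S_n}\prod_x \PP(\sigma_i(x)=\alpha(x))$, and shows that each swap can shrink it by at most a factor of $4$ (by comparing $\alpha$ with $(a,b)\alpha$ and using $\max\{p^2,(1-p)^2\}\geq 1/4$), going from $g(0)=1$ to $g(l)=n^{-n}$. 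Both invariants yield the same constant and both are tight on the hypercube construction; yours is the more standard information-theoretic route and needs the subadditivity of entropy, while the paper's is more elementary (no entropy inequalities, only the trivial bound on $\max\{p,1-p\}$) at the cost of a less familiar-looking potential function.
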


The proof uses a rather unusual invariant: we consider the `heaviest transversal' in the matrix $A_{i,j}=\PP(i\textnormal{ maps to }j)$.
\begin{proof}
	Assume that $T_1,\dots,T_l$ form a strong $(1,n)$-shuffle. For each $1\leq i\leq l$, let us write $\sigma_i=T_{l-i+1}T_{l-i+2}\dots T_l$ for the random permutation obtained from the first $i$ lazy swaps, and let $\sigma_0$ be the identity.

For each $0\leq i\leq l$ and each $\alpha\in S_n$, let
$$g(i,\alpha)=\prod_{x=1}^{n}\PP(\sigma_i(x)=\alpha(x)),$$
and let 
$$g(i)=\max_{\alpha\in S_n} g(i,\alpha).$$

Observe that $g(0)=1$ and $g(l)=\frac{1}{n^n}$. So the lower bound
$$\UU_1(n)\geq \frac{1}{2}n\log_2 n$$
follows immediately from the following claim.

\textbf{Claim.} For all $0\leq i\leq l-1$, we have $$g(i+1)\geq \frac{1}{4}g(i).$$

\textbf{Proof.} Let $\alpha\in S_n$ be such that $g(i)=g(i,\alpha)$, and let $T_{l-i}$ have parameters $(a,b,p)$. Let $\beta=(a,b)\alpha$, $\sigma=\sigma_i$ and $\alpha^{-1}(a)=c$, $\alpha^{-1}(b)=d$. Then
\begin{align*}
	g(i+1,\alpha)&=\prod_{x=1}^{n}\PP(\sigma_{i+1}(x)=\alpha(x))\\
	&=\left[\prod_{x\not =c,d}\PP(\sigma(x)=\alpha(x))\right][(1-p)\PP(\sigma(c)=a)+p\PP(\sigma(c)=b)][(1-p)\PP(\sigma(d)=b)+p\PP(\sigma(d)=a)]\\
	&\geq\left[\prod_{x\not =c,d}\PP(\sigma(x)=\alpha(x))\right][(1-p)\PP(\sigma(c)=a)][(1-p)\PP(\sigma(d)=b)]\\
	&=(1-p)^2g(i,\alpha),		
\end{align*}
and similarly
\begin{align*}
	g(i+1,\beta)&=\prod_{x=1}^{n}\PP(\sigma_{i+1}(x)=\beta(x))\\
	&=\left[\prod_{x\not =c,d}\PP(\sigma(x)=\alpha(x))\right][p\PP(\sigma(c)=a)+(1-p)\PP(\sigma(c)=b)][p\PP(\sigma(d)=b)+(1-p)\PP(\sigma(d)=a)]\\
	&\geq\left[\prod_{x\not =c,d}\PP(\sigma(x)=\alpha(x))\right][p\PP(\sigma(c)=a)][p\PP(\sigma(d)=b)]\\
	&=p^2g(i,\alpha).
\end{align*}
But either $p^2\geq 1/4$ or $(1-p)^2\geq 1/4$, giving $\max\{g(i+1,\alpha),g(i+1,\beta)\}\geq \frac{1}{4}g(i,\alpha)=\frac{1}{4}g(i)$, proving the claim and hence the lemma.
\end{proof}

Curiously, giving an upper bound in the case when $n$ is not a power of 2 is
considerably more difficult that the construction in the power of 2 case. We will
need the following lemma.

\begin{lemma}\label{lemma_strongshuffleupperbound}
For any positive integers $n$ and $r$, we have
\begin{equation*}
	\UU_1(n+r)\leq \UU_1(n)+\UU_1(r)+n+r-1.
\end{equation*}
\end{lemma}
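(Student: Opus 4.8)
The plan is to partition $[n+r]$ into a block $A$ with $|A|=n$ and a block $B$ with $|B|=r$ (say $A=\{1,\dots,n\}$), and to build a strong $(1,n+r)$-shuffle in three stages: a \emph{mixing stage} using $n+r-1$ lazy swaps, after which every element lies in $A$ with probability exactly $\tfrac{n}{n+r}$; then a strong $(1,n)$-shuffle carried out entirely inside $A$; and a strong $(1,r)$-shuffle carried out entirely inside $B$. The last two stages act on disjoint ground sets, so their swaps commute and may be concatenated in either order after the mixing stage. If $\tau$ denotes the random permutation from the mixing stage and $\pi_A,\pi_B$ the independent permutations from the two block shuffles, then for any $x$ and any $j\in A$ one computes $\PP(\pi_A\pi_B\tau(x)=j)=\tfrac1n\,\PP(\tau(x)\in A)=\tfrac{1}{n+r}$, using that $\pi_A$ acts uniformly on $A$ and fixes $B$ while $\pi_B$ fixes $A$; the case $j\in B$ is symmetric. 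Hence the whole sequence is a strong $(1,n+r)$-shuffle of length $(n+r-1)+\UU_1(n)+\UU_1(r)$, which is the claimed bound. It remains to construct the mixing stage.

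For this, I would track the vector $v\in[0,1]^{n+r}$ given by $v_x=\PP(\tau(x)\in A)$, where $\tau$ ranges over partial products of the mixing swaps. Initially $v$ is the $0$-$1$ indicator vector of $A$ (so $n$ ones and $r$ zeros), and a direct check shows that incorporating one more lazy swap with parameters $(a,b,p)$ replaces the pair $(v_a,v_b)$ by $\big((1-p)v_a+pv_b,\ pv_a+(1-p)v_b\big)$ and leaves all other coordinates fixed. So the mixing stage amounts to transforming the indicator vector of $A$ into the constant vector $\tfrac{n}{n+r}\mathbf 1$ using $n+r-1$ of these ``lazy averaging'' moves (each of which preserves the coordinate sum $n$).

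I would achieve this with a single accumulator coordinate, namely the coordinate of the element $1\in A$. At each of the $n+r-1$ steps I pair the accumulator with a not-yet-used coordinate $c$, choosing $p$ so that $c$ is set to exactly $\tfrac{n}{n+r}$ and then never touched again; as partner I take a coordinate still holding the value $1$ if the accumulator's current value is below $\tfrac{n}{n+r}$, and one still holding $0$ if it is above (and either one if it equals $\tfrac{n}{n+r}$). This uses each of the $n+r-1$ non-accumulator coordinates exactly once. Two things must be checked. First, the required $p$ always lies in $[0,1]$: this is automatic, since by the choice rule $\tfrac{n}{n+r}$ always lies between the accumulator's value and the partner's value (which is $0$ or $1$). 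Second — the only genuine point — the process must never demand a ``$1$-partner'' after all original $1$'s have been used, nor a ``$0$-partner'' after all original $0$'s have been used. Here one observes that after using $o$ of the original $1$'s and $z$ of the original $0$'s as partners the accumulator holds exactly $1+\tfrac{or-zn}{n+r}$, and a one-line inequality shows this is $\ge\tfrac{n}{n+r}$ once $o=n-1$ and $\le\tfrac{n}{n+r}$ once $z=r$; so the process never gets stuck. Finally, since the moves preserve $\sum_x v_x=n$ and every non-accumulator coordinate ends equal to $\tfrac{n}{n+r}$, the accumulator ends at $\tfrac{n}{n+r}$ too, so $v$ has become constant, as required.

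The routine parts are the identity describing how a lazy swap acts on $v$ and the two inequalities above; I expect the latter — showing the greedy pairing never runs out of the needed partner type — to be the only step requiring any thought.
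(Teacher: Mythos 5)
Your proposal is correct and follows essentially the same route as the paper: both reduce the problem to the two block shuffles plus an $(n+r-1)$-step greedy equalization of the block-membership probabilities, choosing each swap probability so that one coordinate is pinned to exactly $\tfrac{n}{n+r}$ and never touched again. The only differences are cosmetic — the paper places the equalization stage after the block shuffles and tracks preimage probabilities $\PP(f_t^{-1}(i)\in[n])$ with two pointers marching through the two blocks, whereas you use a single accumulator and track image probabilities (for which your averaging identity holds when each newly constructed swap is incorporated at the chronologically earliest position of the mixing stage, a small orientation detail worth making explicit).
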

\begin{proof}
We can take $\UU_1(n)$ lazy transpositions permuting $\{1,\dots,n\}$ only such that their product $\sigma$ satisfies $\PP(\sigma(i)=j)=1/n$ for all $i,j\in [n]$, and similarly, we can take $\UU_1(r)$ lazy transpositions permuting $n+1,\dots,n+r$ only such that their product $\rho$ satisfies $\PP(\rho(i)=j)=1/r$ for all $i,j\in [n+1,n+r]$. We claim that there exist lazy transpositions $T_1,\dots,T_l$ with $l\leq n+r-1$ such that $\tau=T_lT_{l-1}\dots T_1\rho \sigma$ satisfies $\PP(\tau(i)=j)=1/(n+r)$ for all $i,j\in [n+r]$.
	
	We recursively construct the lazy transpositions $T_1,\dots,T_l$ such that for all $0\leq t\leq l$, writing $f_t$ for the random permutation $T_t T_{t-1}\dots T_1$, there is some $j\in[n+1]$ and $j'\in \{n+1,\dots,n+r+1\}$ with the following properties.
	\begin{itemize}
		\item If $1\leq i<j$ or $n+1\leq i<j'$, then $\PP(f_t^{-1}(i)\in [n])=\frac{n}{n+r}$.
		\item If $j<i\leq n$ or $j'<i\leq n+r$, then $T_1,\dots,T_t$ all fix $i$.
		\item Either $j=n+1$ and $j'=n+r+1$; or $j\not =n+1$, $j'\not= n+r+1$, $\PP(f_t^{-1}(j)\in [n])>\frac{n}{n+r}$ and $\PP(f_t^{-1}(j')\in [n])<\frac{n}{n+r}$.
		\item We have $j+(j'-n)\geq t+2$.
	\end{itemize}
	When $t=0$, these are satisfied for $j=1$, $j'=n+1$. Now assume that $T_1,\dots, T_t$ are already constructed and satisfy the conditions above. If we have $j=n+1$ and $j'=n+r+1$, the process terminates (and we set $l=t$). Otherwise we construct $T_{t+1}$ as follows. Let us write $q=\PP(f_t^{-1}(j)\in [n])$ and $q'=\PP(f_t^{-1}(j')\in [n])$, so we know $q>n/(n+r)>q'$.
	
	If $q+q'>2n/(n+r)$, let $p=\frac{n/(n+r)-q'}{q-q'}.$ Note that $0<p<1$, $(1-p)q'+pq=n/(n+r)$, and $(1-p)q+pq'=q+q'-((1-p)q'+pq)>n/(n+r)$. So if we define $T_{t+1}$ to be the lazy transposition with parameters $(j,j',p)$, then the conditions above will be satisfied when $(t,j,j')$ is replaced by $(t+1,j,j'+1)$. (Note that we cannot have $j'+1=n+r+1$, otherwise $n=\sum_{i\in[n+r]}\PP(f_{t+1}^{-1}(i)\in [n])>n\cdot n/(n+r)+r\cdot n/(n+r)=n$, giving a contradiction.)
	
	Similarly, if $q+q'<2n/(n+r)$, then let $p=\frac{q-n/(n+r)}{q-q'}$, so we have $0<p<1$, $(1-p)q+pq'=n/(n+r)$ and $(1-p)q'+pq<n/(n+r)$. So if $T_{t+1}$ has parameters $(j,j',p)$, then the conditions are satisfied when $(t,j,j')$ is replaced by $(t+1,j+1,j')$.
	
	Finally, if $q+q'=2n/(n+r)$, then let $T_{t+1}$ have parameters $(j,j',1/2)$, so then the conditions are satisfied for $(t+1,j+1,j'+1)$. This finishes the recursive construction.
	
	So let us take $T_1,\dots, T_l$ as above. Note that for $t=l-1$ we have $j\leq n$ and $j'\leq n+r$. By the last property, we get $l\leq n+r-1$. Furthermore, by the first property (for $t=l$), we have $\PP(f_l^{-1}(i)\in [n])=\frac{n}{n+r}$ for all $i$. It follows that $\PP(f_l\rho\sigma(x)=y)=1/(n+r)$ for all $x,y\in [n+r]$, proving the claim.
\end{proof}

\begin{proof}[Proof of Theorem~\ref{theorem_strongshufflefor1}]
The lower bound follows from Lemma~\ref{lemma_strongshufflelowerbound}, and we have seen that we get the corresponding upper bound $\UU_1(n)\leq \frac{1}{2}n\log_2 n$ when $n$ is a power of $2$. So we have
 $$\UU_1(2^t)=2^{t-1}t.$$

We now prove the upper bound in the case when $n$ is not a power of $2$. Let $n$ have binomial expansion $$n=\sum_{i=0}^{\lfloor \log_2 n\rfloor}2^i\epsilon_i$$ (where $\epsilon_i\in\{0,1\}$ for all $i$). By
Lemma~\ref{lemma_strongshuffleupperbound}, we have
$$\UU_1(s+2^i)\leq \UU_1(s)+\UU_1(2^i)+2^i+s-1$$
for all $s$. Using this several times with $s=\sum_{j<i}2^j\epsilon_j$, we get
\begin{align*}
	\UU_1(n)&\leq \sum_{i:\epsilon_i=1}(\UU_1(2^i)+\sum_{j\leq i}{2^j\epsilon_j}-1)\\
	&\leq \sum_{i:\epsilon_i=1}(\UU_1(2^i)+2^{i+1})\\
	&=\sum_{i:\epsilon_i=1}(2^{i-1} i+2^{i+1})\\
	&\leq \sum_{i:\epsilon_i=1}(2^{i-1} \log_2 n+2^{i+1})\\
	&=\frac{1}{2}n\log_2 n+2n,
\end{align*}
as claimed.
\end{proof}

\subsection{Strong \texorpdfstring{$(2,n)$}{(2,n)}-shuffles}
We now turn to the proof of Theorem~\ref{theorem_strongshufflefor2}. Our first step is to introduce the notion of a `division shuffle'; this will be crucial for our construction. Given a positive integer $n$ with $n$ even, 
let us say that the lazy transpositions $T_1,\dots, T_l$ form a \emph{division $(2,n)$-shuffle} (of length $l$) if whenever $i,j\in [n]$ are distinct, then
\begin{align*}
	\PP(T_1\dots T_l(i)\in [n/2]\textnormal{ and } T_1\dots T_l(j)\in [n/2])&=\frac{(n/2)(n/2-1)}{n(n-1)}=\frac{1}{4}-\frac{1}{4(n-1)},\\
	\PP(T_1\dots T_l(i)\not \in [n/2]\textnormal{ and } T_1\dots T_l(j)\in [n/2])&=\frac{(n/2)^2}{n(n-1)}=\frac{1}{4}+\frac{1}{4(n-1)}, \\
	\PP(T_1\dots T_l(i)\not \in [n/2]\textnormal{ and } T_1\dots T_l(j)\not \in [n/2])&=\frac{(n/2)(n/2-1)}{n(n-1)}=\frac{1}{4}-\frac{1}{4(n-1)}.
\end{align*}
In other words, whenever we pick two elements $i,j$ in $[n]$, then the image under $T_1\dots T_l$ of the pair $(i,j)$ is distributed between $\{1,\dots,n/2\}$ and $\{n/2+1,\dots,n\}$ in the same way as a random pair coming from $[n]$. Let $h_0(n)$ denote the shortest possible length of a division $(2,n)$-shuffle. It is not difficult to see that $$\UU_2(2n)\leq h_0(n)+2\UU_2(n),$$ so it suffices to bound $h_0(n)$. We will do this by an inductive argument, but it will be convenient to use a slightly stronger property.

If a division $(2,n)$-shuffle $T_1,\dots,T_l$ is also a strong $(1,n)$-shuffle, i.e., for all $i,j\in [n]$ we have
\begin{equation*}
		\PP(T_1\dots T_l(i)=j)=1/n,
\end{equation*}
let us say that $T_1,\dots, T_l$ is a \emph{nice division $(2,n)$-shuffle}. Let $h(n)$ be the shortest possible length of a nice division $(2,n)$-shuffle. We will prove the following result.

\begin{lemma}\label{lemma_hinductive}
	If $n$ is even, then $$h(2n)\leq 2h(n)+2n.$$
\end{lemma}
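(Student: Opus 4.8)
The plan is to take a nice division $(2,n)$-shuffle $S$ of length $h(n)$ on each of the two halves $[n]$ and $[n+1,2n]$ of $[2n]$, run them independently, and then glue the outputs together with $2n$ extra lazy transpositions to fix up the distribution across the splitting line. I would label the $2n$ points so that the final target split we want is $[2n]$ into $A=[n]$ and $B=[n+1,2n]$, but begin by applying a nice division shuffle on $C_1=[n]$ (with its own internal split $[n/2]$ versus $[n/2+1,n]$) and another on $C_2=[n+1,2n]$ (with internal split $[n+1,3n/2]$ versus $[3n/2+1,2n]$). After these $2h(n)$ swaps, for any pair $i,j$ the image is distributed: within $C_1$ exactly like a random pair from $C_1$, within $C_2$ exactly like a random pair from $C_2$, and the strong $(1,n)$-shuffle property on each half means each single element's image is uniform within its own half. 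The issue is that a pair $i,j$ starting in the same half stays in that half; we need to mix across the $C_1/C_2$ divide.

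The middle step is to run a ``swapping'' layer of $n$ lazy transpositions, pairing up the $n/2$ elements of $[n/2+1,n]$ (the ``upper'' part of $C_1$) with the $n/2$ elements of $[3n/2+1,2n]$ (the ``upper'' part of $C_2$) via transpositions of the form $(v, \phi(v), 1/2)$ for a fixed bijection $\phi$, and similarly pairing the ``lower'' parts $[n/2]$ and $[n+1,3n/2]$ — that's $n/2 + n/2 = n$ swaps. Actually, to get the count $2n$ I expect to need a bit more care: the natural approach is to use $n$ swaps to exchange across halves and then verify that the resulting split is exactly the desired $(A,B)$-split after relabelling, possibly with one more pass. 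The key point is that after the first two nice division shuffles, each element sits in a ``quadrant'' (lower-left, upper-left, lower-right, upper-right) with known probabilities, and a single $1/2$-lazy-swap between a left point and a right point in matching vertical position symmetrizes the left/right coordinate while preserving the joint structure — because the two division shuffles were run independently, the relevant events factorize, and a $1/2$ mix of two product distributions with the right marginals lands on the target.

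I would then carefully recompute, for an arbitrary pair $i,j$, the four probabilities $\PP(\text{image of } i \in A, \text{image of } j\in A)$ etc., splitting into cases according to whether $i,j$ started in the same $C_k$ or different ones, and check they match $\frac14 \pm \frac{1}{4(2n-1)}$; simultaneously check each single image is uniform on $[2n]$ to preserve the ``nice'' property. The main obstacle, I expect, is the bookkeeping in the cross-half case ($i\in C_1$, $j\in C_2$): here the images were already independent after the first phase, and I need the gluing swaps to turn ``independent uniform within each half'' into exactly the pair-distribution of a random pair from $[2n]$ — this requires the swaps to be set up so that the left/right indicator of $i$'s image and of $j$'s image become appropriately negatively correlated (a random pair from $[2n]$ is slightly more likely to be split than a random pair with independent halves). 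Getting the transposition layer to produce exactly this correlation, with exactly $2n$ swaps and without disturbing the within-half cases, is the delicate part; I anticipate the construction uses that the two halves are symmetric and that a matching of $1/2$-swaps between corresponding positions does precisely the right thing, but the verification is where the real work lies.
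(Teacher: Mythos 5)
Your overall architecture is the same as the paper's: two recursive nice division $(2,n)$-shuffles on the two halves, then $n$ cross-half lazy transpositions matching corresponding positions (left with left, right with right), then $n$ further swaps to restore the strong $(1,2n)$-property. But there is a genuine gap at exactly the point you flag as delicate: the cross-half layer cannot consist of swaps that all fire with probability $1/2$. If every matching swap has probability $1/2$, then for two elements $i,j$ starting in the same half, once their images land at distinct positions the two swap indicators are independent fair coins, so $\PP(\textnormal{both end in }[n])$ is exactly $\frac14$ rather than the required $\frac14-\frac{1}{4(2n-1)}$; the internal left/right structure produced by the recursive division shuffles is completely invisible to the top/bottom outcome, and no negative correlation is created. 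For cross-half pairs one gets $\frac14-\frac{1}{4n}$, which also misses the target, in the opposite direction. The final layer of within-half $1/2$-swaps does not move anything across the $[n]$ versus $[n+1,2n]$ divide, so it cannot repair either deficit.

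The missing idea is an asymmetric choice of probabilities keyed to the internal left/right split: swap each left column with probability $q$ and each right column with probability $1-q$, where $q\in(0,1/2)$ is chosen so that $q(1-q)=\frac{n}{4(2n-1)}$. Because the recursive division shuffles distribute the images of a same-half pair between left and right exactly like a random pair, the gap between $q^2+(1-q)^2$ and $2q(1-q)$ converts that left/right correlation into the required top/bottom correlation: one computes $\PP(\textnormal{both in }[n])=\frac14-\frac{1-4q(1-q)}{4(n-1)}$ for same-half pairs and $\frac14-\frac{q(1-q)}{n}$ for cross-half pairs, and the single equation $q(1-q)=\frac{n}{4(2n-1)}$ happens to make both equal to $\frac14-\frac{1}{4(2n-1)}$ simultaneously. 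So the verification you defer is not mere bookkeeping: carried out with your uniform $1/2$-swaps it refutes the construction, and it is precisely what forces the non-uniform $q$.
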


Before we prove this lemma, we informally describe our 
construction. We divide the elements of $[2n]$ into two (equal sized) groups: `top' and `bottom' points. Furthermore, we further divide each of the top and bottom groups into `left' and `right'. We start with $h(n)$ lazy transpositions on the bottom points so that the image of any pair coming from the bottom will be divided between left and right in the same way as a random pair. We do the same thing for top vertices. Then, for some $q\in (0,1)$, we will swap top-bottom pairs on the left with probability $q$, and top-bottom pairs on the right with probability $1-q$, see Figure~\ref{figure_divisionshuffle}. It can be shown that there is some $q$ such that we end up with a division $(2,2n)$-shuffle. Indeed, if $q=1/2$, then any two points which start in the same part of the top-bottom division are too likely to end up in the same group, whereas they are too likely to end up in opposite groups if $q=0$, so by continuity we can pick an appropriate value of $q$. The construction finishes with $n$ additional lazy transpositions guaranteeing that our division $(2,2n)$-shuffle is also a strong $(1,2n)$-shuffle.

	\begin{figure}[h]
	\begin{tikzpicture}[node distance={30mm}, thick, main/.style={fill,circle}]
		
		\node[main, label=below:{$\frac{n}{2}$}](b1) at (-1.3,0){};
		\node[main, label=below: {$\dots$}](b2) at (-2.6,0){};
		\node[main, label=below: {$2$}](b3) at (-3.9,0){};
		\node[main, label=below:{$1$}](b4) at (-5.2,0){};
		\node[main, label=below: {$\frac{n}{2}+1$}](b5) at (1.3,0){};
		\node[main, label=below: {$\frac{n}{2}+2$}](b6) at (2.6,0){};
		\node[main,label=below: {$\dots$}](b7) at (3.9,0){};
		\node[main,label=below: {$n$}](b8) at (5.2,0){};
\node[main, label=above:{$\frac{3n}{2}$}](t1) at (-1.3,2){};
\node[main, label=above: {$\dots$}](t2) at (-2.6,2){};
\node[main, label=above: {$n+2$}](t3) at (-3.9,2){};
\node[main, label=above:{$n+1$}](t4) at (-5.2,2){};
\node[main, label=above: {$\frac{3n}{2}+1$}](t5) at (1.3,2){};
\node[main, label=above: {$\frac{3n}{2}+2$}](t6) at (2.6,2){};
\node[main,label=above: {$\dots$}](t7) at (3.9,2){};
\node[main,label=above: {$2n$}](t8) at (5.2,2){};
\draw (t1)--(b1)node[draw=none,fill=none,font=\scriptsize,midway,left] {$q$};
\draw (t2)--(b2)node[draw=none,fill=none,font=\scriptsize,midway,left] {$q$};
\draw (t3)--(b3)node[draw=none,fill=none,font=\scriptsize,midway,left] {$q$};
\draw (t4)--(b4)node[draw=none,fill=none,font=\scriptsize,midway,left] {$q$};
\draw (t5)--(b5)node[draw=none,fill=none,font=\scriptsize,midway,right] {$1-q$};
\draw (t6)--(b6)node[draw=none,fill=none,font=\scriptsize,midway,right] {$1-q$};
\draw (t7)--(b7)node[draw=none,fill=none,font=\scriptsize,midway,right] {$1-q$};
\draw (t8)--(b8)node[draw=none,fill=none,font=\scriptsize,midway,right] {$1-q$};
	\end{tikzpicture}
	\centering
	\caption{To obtain our division $(2,2n)$-shuffle, we divide the points into top/bottom and left/right. We perform nice division $(2,n)$-shuffles at the top and at the bottom, and then take the lazy transpositions shown on this figure, for some appropriately chosen value of $q$.}
	\label{figure_divisionshuffle}
\end{figure}
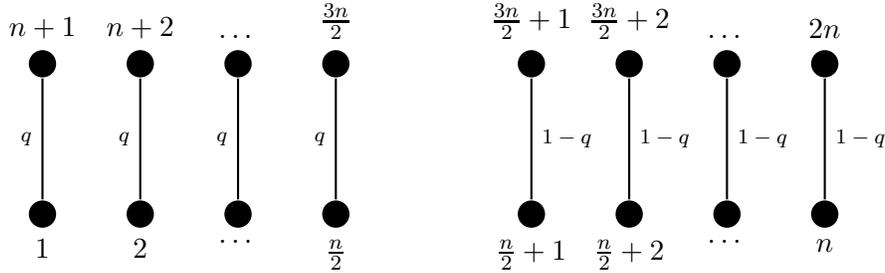
\begin{proof}
	Let $T_1,\dots, T_l$ be a nice division $(2,n)$-shuffle of length $l=h(n)$. If $T_i$ has parameters $(a,b,p)$ (with $a,b\in [n]$), let $T_i'$ be the lazy transposition with parameters $(n+a,n+b,p)$. So $T_1',\dots, T_l'$ form a nice division $(2,n)$-shuffle on ground set $[n+1,2n]$. Let us write $\sigma$ for the random permutation $T_1\dots T_l$, and similarly let $\sigma'=T_1'\dots T_l'$.
	
	Let $q$ be some number in $(0,1)$ (specified later). For each $1\leq i\leq n/2$, let $S_i$ be the lazy transposition with parameters $(i,n+i,q)$, and for each $n/2+1\leq i\leq n$, let $S_i$ have parameters $(i,n+i,1-q)$. Let $\rho=S_1\dots S_n$.
	
	Observe that if $i,j$ are distinct elements of $[2n]$ such that either $i,j\in [n]$ or $i,j\in [n+1,2n]$, then
	
	\begin{align*}
		\PP(\rho\sigma'\sigma(i)\in [n]\textnormal{ and } \rho\sigma'\sigma(j)\in [n])&=\left(\frac{1}{4}-\frac{1}{4(n-1)}\right)(q^2+(1-q)^2)+\left(\frac{1}{4}+\frac{1}{4(n-1)}\right)2q(1-q)\\
		&=\frac{1}{4}-\frac{1}{4(n-1)}(1-4q(1-q)),\\
		\PP(\rho\sigma'\sigma(i)\not \in [n]\textnormal{ and } \rho\sigma'\sigma(j)\in [n])&=\left(\frac{1}{4}-\frac{1}{4(n-1)}\right)2q(1-q)+\left(\frac{1}{4}+\frac{1}{4(n-1)}\right)(q^2+(1-q)^2)\\
		&=\frac{1}{4}+\frac{1}{4(n-1)}(1-4q(1-q)),\\
		\PP(\rho\sigma'\sigma(i)\not \in [n]\textnormal{ and } \rho\sigma'\sigma(j)\not \in [n])&=\frac{1}{4}-\frac{1}{4(n-1)}(1-4q(1-q)).
	\end{align*}
Similarly, if $i\in [n]$ and $j\in [n+1,2n]$ or vice versa, then
	\begin{align*}
	\PP(\rho\sigma'\sigma(i)\in [n]\textnormal{ and } \rho\sigma'\sigma(j)\in [n])&=\frac{1}{2}q\left(\frac{n/2-1}{n}(1-q)+\frac{1}{2}q\right)+\frac{1}{2}(1-q)\left(\frac{n/2-1}{n}q+\frac{1}{2}(1-q)\right)\\
	&=\frac{1}{4}-\frac{1}{n}q(1-q),\\
	\PP(\rho\sigma'\sigma(i)\not\in [n]\textnormal{ and } \rho\sigma'\sigma(j)\in [n])&=\frac{1}{2}q\left(\frac{1}{n}+\frac{n/2-1}{n}q+\frac{1}{2}(1-q)\right)+\frac{1}{2}(1-q)\left(\frac{1}{n}+\frac{n/2-1}{n}(1-q)+\frac{1}{2}q\right)\\
	&=\frac{1}{4}+\frac{1}{n}q(1-q),\\
	\PP(\rho\sigma'\sigma(i)\not\in [n]\textnormal{ and } \rho\sigma'\sigma(j)\not\in [n])	&=\frac{1}{4}-\frac{1}{n}q(1-q).\\
\end{align*}

Observe that $\frac{n}{4(2n-1)}\in (0,1/4)$, so we can pick $q\in (0,1/2)$ such that

$$q(1-q)=\frac{n}{4(2n-1)}.$$
It follows from the equations above that for this particular choice of $q$, we have, for all $i,j\in [2n]$ (distinct),
\begin{align*}
	\PP(\rho\sigma'\sigma(i)\in [n]\textnormal{ and } \rho\sigma'\sigma(j)\in [n])&=\frac{1}{4}-\frac{1}{4(2n-1)},\\
	\PP(\rho\sigma'\sigma(i)\not \in [n]\textnormal{ and } \rho\sigma'\sigma(j)\in [n])&=\frac{1}{4}+\frac{1}{4(2n-1)}, \\
	\PP(\rho\sigma'\sigma(i)\not \in [n]\textnormal{ and } \rho\sigma'\sigma(j)\not \in [n])&=\frac{1}{4}-\frac{1}{4(2n-1)}.
\end{align*}
So $S_1,\dots S_n, T_1',\dots, T_l', T_1,\dots,T_l$ form a division $(2,2n)$-shuffle. 

Let $W_i$ be the lazy transposition $(i,n/2+i,1/2)$ for $i\in [n/2]\cup [n+1,n+n/2]$. It is easy to check that $$W_1,\dots, W_{n/2}, W_{n+1},W_{n+2},\dots,W_{n+n/2},S_1,\dots S_n, T_1',\dots, T_l', T_1,\dots,T_l$$ is both a strong $(1,2n)$-shuffle and a division $(2,2n)$-shuffle. The result follows.
\end{proof}

\begin{proof}[Proof of Theorem~\ref{theorem_strongshufflefor2}]
Observe that whenever $n\geq 2$ is even, then
\begin{equation}
	h(n+2)\leq h(n)+2n+1.\label{equation_hincreaseby1}
\end{equation}
Indeed, assume that $T_1,\dots,T_l$ is a division $(2,n)$-shuffle (fixing $n+1$ and $n+2$), and let $S_1,\dots, S_{2n+1}$ be lazy transpositions such that $S_1\dots S_{2n+1}$ maps $(n+1,n+2)$ uniformly to the pairs from $[n+2]$. Then $\sigma=S_{2n+1}\dots S_1$ satisfies $\PP(\sigma^{-1}(n+1)=i,\sigma^{-1}(n+2)=j)=\frac{1}{(n+2)(n+1)}$ for all $i,j\in [n+2]$ distinct. Conditioning on whether or not $\sigma(i)$ and $\sigma(j)$ belong to $\{n+1,n+2\}$, we see that for $\rho=T_1\dots T_l\sigma$ we have
\begin{align*}
	\PP(\rho(i)\in [n/2]\cup\{n+1\}\textnormal{ and } \rho(j)\in [n/2]\cup\{n+1\})&=2\frac{n}{(n+2)(n+1)}\frac{1}{2}+\frac{n(n-1)}{(n+2)(n+1)}\left(\frac{1}{4}-\frac{1}{4(n-1)}\right)\\
	&=\frac{1}{4}-\frac{1}{4(n+1)},\\
	\PP(\rho(i)\not\in [n/2]\cup\{n+1\}\textnormal{ and } \rho(j)\in [n/2]\cup\{n+1\})&=\frac{1}{4}+\frac{1}{4(n+1)},\\
	\PP(\rho(i)\not\in [n/2]\cup\{n+1\}\textnormal{ and } \rho(j)\not\in [n/2]\cup\{n+1\})&=\frac{1}{4}-\frac{1}{4(n+1)}.
\end{align*}
Furthermore, $T_1,\dots,T_l,S_{2n+1},S_{2n},\dots,S_1$ is easily seen to be a strong $(1,n+2)$-shuffle. The bound~\eqref{equation_hincreaseby1}  follows easily.

We also know from Lemma~\ref{lemma_hinductive} that
\begin{equation}
	h(2n)\leq 2h(n)+2n \label{equation_hdouble}
\end{equation}

and hence

\begin{equation}
	h(2n+2)\leq 2h(n)+6n+1.\label{equation_hdoubleplus2}
\end{equation}

Clearly $h(2)=1$, so if follows from~\eqref{equation_hdouble} and~\eqref{equation_hdoubleplus2} that whenever $n$ is even, we have
\begin{equation}
h(n)\leq 3n\log_2 n.\label{equation_hbound}
\end{equation}

Observe that if $n$ is even, then we have
\begin{equation}\label{equation_htoU2}
\UU_2(n)\leq h(n)+2\UU_2(n/2).
\end{equation}
Indeed, assume that $T_1,\dots, T_l$ form a division $(2,n)$-shuffle, let $T_1',\dots,T_u'$ be a strong $(2,n/2)$-shuffle (fixing $n/2+1,\dots,2n$), and let $T_1'',\dots, T_v''$ be another strong $(2,n/2)$-shuffle on ground set $n/2+1,\dots, n$ (in particular, it fixes $1,\dots, n/2$). Then $$T_1'',\dots, T_v'',T_1',\dots, T_u',T_1,\dots, T_l$$ is easily seen to be a strong $(2,n)$-shuffle, giving~\eqref{equation_htoU2}.

Furthermore, for all $n$ we have
\begin{equation}
	\UU_2(n+1)\leq \UU_2(n)+n.\label{equation_Uincreaseby1}
\end{equation}
Indeed, if $T_1,\dots,T_l$ form a strong $(2,n)$-shuffle (fixing $n+1$) and $S_1,\dots, S_n$ satisfy $\PP(S_1\dots S_n(i)=n+1)=1/(n+1)$ for all $i$, then $T_1,\dots, T_l,S_1,\dots, S_n$ is easily seen to be a strong $(2,n+1)$-shuffle.

It follows from~\eqref{equation_hbound}, \eqref{equation_htoU2} and~\eqref{equation_Uincreaseby1} that

$$\UU_2(n)\leq 2\UU_2(n/2)+3n\log_2 n$$
if $n$ is even, and
$$\UU_2(n)\leq 2\UU_2((n-1)/2)+3n\log_2 n+n$$
if $n$ is odd. Since $\UU_2(1)=0$ (and $\UU_2(2)=1$), we get by induction that
$$\UU_2(n)\leq 4n(\log_2 n)^2$$
for all $n$, giving the result.
\end{proof}

\section{Open problems}
We finish this paper with some open problems. Despite the important recent progress by Groenland, Johnston, Radcliffe and Scott~\cite{groenland2022perfect}, there is still a large gap between the upper and lower bounds for the problem raised by Fitzsimons~\cite{fitzsimons} and Angel and Holroyd~\cite{angel2018perfect}, and it would be very interesting to close this gap.
\begin{question}[\cite{fitzsimons,angel2018perfect}]
What is the asymptotic behaviour of $U(n)?$
\end{question}

One of the main topics considered in this paper was determining the shortest possible length $\UU_k(n)$ of strong $(k,n)$-shuffles. We gave essentially tight bounds in the case $k=1$, and in the next case $k=2$ we gave an upper bound of $O(n\log^2 n)$. It would be interesting to decide whether or not $\UU_k(n)$ is `small' for all fixed values of $k$.

\begin{question}\label{question_strongshuffle}
Given a fixed positive integer $k$, do we have $\UU_k(n)=O(n^{1+\epsilon})$ for all $\epsilon>0$?
\end{question}

We believe that the answer to Question~\ref{question_strongshuffle} should be 
positive. However, a negative answer would also be very interesting, as it would necessarily give a significantly improved lower bound for $U(n)$.

Another related problem is to fully close the gap between the bounds for $\UU_2(n)$. Theorem~\ref{theorem_strongshufflefor1} gives a lower bound $\UU_2(n)\geq \UU_1(n)=\Theta(n\log n)$, whereas by Theorem~\ref{theorem_strongshufflefor2} we have $\UU_2(n)=O(n\log^2 n)$.

\begin{question}
Do we have $\UU_2(n)=\Theta(n\log n)$?
\end{question}

Finally, as mentioned in the introduction, it would be interesting to consider the reachability problem that is analogous to strong shuffles. Given some $k$ and $n$, let $R_k^\textnormal{all}(n)$ denote the minimal number of lazy transpositions $T_1,\dots,T_l$ such that $T_1\dots T_l$ maps each $k$-tuple from $[n]$ to any other $k$-tuple with positive probability. As mentioned before, the case $k=1$ is the well-known `gossiping dons' problem, and for general $k$ we get a generalisation of that question. As noted in the introduction (in the special case $k=2$), we have $R_k^\textnormal{all}(n)\leq 2U_k(n)$ for all $k,n$, and therefore $R_k^\textnormal{all}(n)=\Theta(n)$ for all $k$.

\begin{question}
What is the value of $R_k^\textnormal{all}(n)$, exactly or asymptotically?
\end{question}

Even in the case $k=2$ this seems to be a difficult problem. Perhaps the answer is
about $3n$, as noted in the introduction?

\bibliography{Bibliography}
\bibliographystyle{abbrv}	

\end{document}